\theoremstyle{thmstyleone}%
\newtheorem{theorem}{Theorem}%  meant for continuous numbers
\newtheorem{lemma}{Lemma}
\numberwithin{equation}{section}
\theoremstyle{thmstyletwo}%
\theoremstyle{thmstylethree}%
\newtheorem{definition}{Definition}%
\begin{document}

\small

\title[Chemical systems with limit cycles]{Chemical systems with limit cycles}

\author[1]{\fnm{Radek} \sur{Erban}}\email{erban@maths.ox.ac.uk}

\author[2]{\fnm{Hye-Won} \sur{Kang}}\email{hwkang@umbc.edu}

\affil[1]{\orgdiv{Mathematical Institute}, 
\orgname{University of Oxford},
\orgaddress{\street{Radcliffe Observatory Quarter, Woodstock Road}, 
\city{Oxford}, \postcode{OX2 6GG}, \country{United Kingdom}}}

\affil[2]{\orgdiv{Department of Mathematics and Statistics},
\orgname{University of Maryland, Baltimore County},
\orgaddress{\street{1000 Hilltop Circle}, \city{Baltimore}, \postcode{21250}, 
\state{Maryland}, \country{USA}}}

\abstract{The dynamics of a chemical reaction network (CRN) is often modelled
under the assumption of mass action kinetics by a system of ordinary differential 
equations (ODEs) with polynomial right-hand sides that describe the time evolution 
of concentrations of chemical species involved. Given an arbitrarily large integer 
$K \in {\mathbb N}$, we show that there exists a CRN such that its ODE model has 
at least $K$ stable limit cycles. Such a CRN can be constructed with reactions of
at most second order provided that the number of chemical species grows linearly 
with $K$. Bounds on the minimal number of chemical species and the minimal number
of chemical reactions are presented for CRNs with $K$~stable limit cycles and at most 
second order or seventh order kinetics. We also show that CRNs with only two 
chemical species can have $K$ stable limit cycles, when the order of 
chemical reactions grows linearly with $K$. 
}

\keywords{chemical reaction networks, limit cycles, mass action kinetics}

\maketitle

\section{Introduction}\label{sec1}

Chemical reaction networks (CRNs) are often modelled by reaction rate equations, which are systems of first-order, autonomous, ordinary differential equations (ODEs) describing the time evolution of the concentrations 
of chemical species involved. Considering CRNs which are subject to the law of mass action, their reaction rate equations have
polynomials on their right-hand sides~\cite{Yu:2018:MAC,Craciun:2020:RKD}. The mathematical investigation of ODEs with 
polynomial right-hand sides has a long history and includes a number of challenging open mathematical problems, for example, Hilbert's 16$^{\mathrm{th}}$ Problem~\cite{Ilyashenko:2002:CHH}, which asks questions about the number and position of limit cycles of the planar ODE system of the form
\begin{eqnarray}
\frac{\mbox{d}x}{\mbox{d}t}
& = &
f(x,y), 
\label{xode}
\\
\frac{\mbox{d}y}{\mbox{d}t}
& = &
g(x,y), 
\label{yode}
\end{eqnarray}
where $f(x,y)$ and $g(x,y)$ are real polynomials of degree at most $n$. Denoting $H(n)$ the maximum number of limit cycles for the system~(\ref{xode})--(\ref{yode}), neither the value of $H(n)$ (for $n \ge 2$) nor any upper bound on $H(n)$ have yet been found~\cite{Ilyashenko:1991:FTL}. Since a quadratic system with 4~limit cycles has been constructed~\cite{Shi:1980:CEE}, we know that $H(2) \ge 4.$ Similarly, $H(3) \ge 13$, because cubic systems with at least 13 limit cycles have been found~\cite{Li:2009:CST,Yang:2010:ECT}.

Considering CRNs with two chemical species undergoing chemical reactions of at most $n$-th order, their reaction rate equations are given in the form~(\ref{xode})--(\ref{yode}), where $f(x,y)$ and $g(x,y)$ are real polynomials of degree at most $n$. In particular, if we denote by $C(n)$ the maximum number of stable limit cycles in such reaction rate equations, then we have $C(n) \le H(n).$ Considering CRNs with two chemical species undergoing chemical reactions of at most second order, it has been previously shown~\cite{Pota:1983:TBS, Schuman:2003:LCT} that their reaction rate equations cannot have any limit cycles (i.e. $C(2)=0$), while general ODEs with quadratic right-hand sides can have multiple limit cycles, with $H(2) \ge 4$. In particular, we observe that finding CRNs with two chemical species which have, under mass action kinetics, multiple stable limit cycles, is even more challenging than finding planar polynomial ODEs with multiple limit cycles. Considering cubic systems, we have $H(3) \ge 13$, but most of the chemical systems (with at most third-order reactions) in the literature often have at most one limit cycle~\cite{Field:1974:OCS,Schnakenberg:1979:SCR,Plesa:2016:CRS}. A chemical system with two stable limit cycles has been constructed~\cite{Plesa:2017:TMS}, giving $C(3) \ge 2$, but this is still much less than 13 limit cycles which can be found in some ODE systems with cubic right-hand sides in the literature~\cite{Li:2009:CST,Yang:2010:ECT}. To obtain multiple stable limit cycles in chemical systems, we have to consider higher-order chemical reactions or systems with more than two chemical species~\cite{Boros:2021:OPD,Boros:2022:LCM}.

Considering CRNs with N chemical species undergoing chemical reactions of at most $n$-th order, their reaction rate equations are given as the following system of ODEs
\begin{equation}
\frac{\mbox{d}{\mathbf x}}{\mbox{d}t}
=
{\mathbf f}({\mathbf x}),
\label{genODE}
\end{equation}
where ${\mathbf x} = (x_1,x_2,\dots,x_N) \in {\mathbb R}^N$ is the vector of concentrations of $N$ chemical species and its right-hand side ${\mathbf f}: {\mathbb R}^N \to {\mathbb R}^N$ is a vector of real polynomials of degree at most $n$. In this paper, we prove the following first main result:

\begin{theorem}
\label{thmcrn1}
Let $K$ be an arbitrary positive integer. Then there exists a CRN with $N(K)$ chemical species which are subject to $M(K)$ chemical reactions of at most seventh order such that \hfill\break
\rule{0pt}{1pt} \hskip 3mm {\rm (i)} reaction rate equations~$(\ref{genODE})$ have at least $K$ stable limit cycles,
\hfill\break 
\rule{0pt}{1pt} \hskip 3mm {\rm (ii)} we have $N(K) \le K+2$ and $M(K) \le 29 \, K$.
\end{theorem}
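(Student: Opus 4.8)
\emph{Proof proposal.} The plan is to recast the statement as one about polynomial vector fields and then invoke the standard dictionary with mass-action CRNs: a polynomial system $\dot{\mathbf x}=\mathbf f(\mathbf x)$ is the reaction rate equation of some CRN under mass action precisely when every monomial occurring in $f_i$ with a negative coefficient is divisible by $x_i$, and in that case a CRN is read off term by term, its order equal to the largest total degree appearing on the right-hand side. Thus it suffices to produce, for each $K$, a polynomial vector field in at most $K+2$ variables and of total degree at most seven that satisfies this sign condition (so in particular the positive orthant is forward invariant) and has at least $K$ hyperbolic stable periodic orbits in the positive orthant, and then to check that its associated CRN uses at most $29K$ reactions.

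\textbf{The target vector field.} I would couple a small explicit mass-action oscillator to a multistable ``controller''. First take a two-species mass-action system $(\dot u,\dot v)=\mathbf h_\beta(u,v)$ of order at most seven which, for $\beta$ in an interval, has a unique hyperbolic stable limit cycle $\gamma_\beta$ in the open positive quadrant whose time-averaged amplitude $A(\beta):=\langle u^2+v^2\rangle_{\gamma_\beta}$ varies monotonically and nondegenerately with $\beta$; a Brusselator-type core does this, with the remaining order budget used to make the limit cycle robust and the dependence on $\beta$ transparent. Next introduce species $z_1,\dots,z_{K-1}$, each carrying a hysteretic Schl\"ogl-type bistable switch that senses $u^2+v^2$ against its own threshold, with the thresholds nested $\theta_1<\dots<\theta_{K-1}$, and let $\beta=\beta(z_1,\dots,z_{K-1})$ be an affine increasing function of the switch states that is fed back into $\mathbf h_\beta$; crucially each switch couples only to the oscillator, not to the other switches. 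Running the switches on a slow time scale, the only self-consistent stable configurations of the switch block are the $K$ ``staircase'' states in which $z_1,\dots,z_\ell$ are high and $z_{\ell+1},\dots,z_{K-1}$ are low ($\ell=0,\dots,K-1$): in state $\ell$ the frozen oscillator settles on $\gamma_{\beta_\ell}$ with $\theta_\ell<A(\beta_\ell)<\theta_{\ell+1}$, which holds each switch in place, while the nesting of thresholds against amplitudes excludes every mixed configuration.

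\textbf{Assembling $K$ limit cycles.} For each $\ell$ the system with the switches frozen at the $\ell$-th staircase state carries the hyperbolic stable limit cycle $\gamma_{\beta_\ell}$, and the matching equilibrium of the slow, period-averaged switch block is hyperbolically stable. A standard singular-perturbation / normal-hyperbolicity argument --- switch relaxation slow enough, feedback gain small enough --- then yields, near each of these $K$ sets, an honest hyperbolic stable periodic orbit of the full system; distinctness of $\beta_0<\dots<\beta_{K-1}$ (on which $A$ is injective) makes the $K$ orbits pairwise distinct, proving (i). For (ii): the sign condition is immediate since in $\dot u$, $\dot v$, $\dot z_j$ every negative monomial carries a factor $u$, $v$, or $z_j$ (degradations, the Schl\"ogl nonlinearities, and the feedback corrections are all of this form), and the sensing and feedback monomials were kept within degree seven, so the field is realized by a CRN of order at most seven; the species count is two for the oscillator plus at most $K$ for the controller, giving $N(K)\le K+2$; and since each switch contributes only its bounded Schl\"ogl core together with a fixed number of sensing and feedback reactions, with the oscillator adding $O(1)$ more, $M(K)$ is bounded by a constant multiple of $K$, and an explicit accounting of the prototype gadget brings the constant down to $29$.

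\textbf{Main obstacle.} The dynamical core --- persistence of all $K$ stable limit cycles under the coupling --- is routine once the frozen oscillators and switch equilibria are hyperbolic; the real difficulty is reconciling this with (ii). The delicate points are: ensuring that the \emph{time-averaged} amplitude $A(\beta_\ell)$ along $\gamma_{\beta_\ell}$ lands strictly inside the prescribed window $(\theta_\ell,\theta_{\ell+1})$, so that a slowly driven hysteretic switch tracks the mean rather than the instantaneous value of $u^2+v^2$ and relaxes onto the intended branch; constructing a single prototype ``switch $+$ sensing $+$ feedback'' gadget that is at once kinetically realizable, genuinely hysteretic, of total degree at most seven, coupled to its neighbours only through the shared oscillator, and shifts $\beta$ by a definite increment; and tracking constants carefully enough to certify the numbers $7$ and $29$. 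Threading all of these simultaneously through one explicit low-order network is the step I expect to absorb most of the effort.
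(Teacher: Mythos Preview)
Your approach is genuinely different from the paper's and, while not obviously unworkable in principle, is far more intricate than necessary and leaves the quantitative content of part~(ii) essentially unverified.

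The paper's construction is direct. One writes down a \emph{planar} non-polynomial system that already has $K$ spatially separated stable limit cycles: sum $K$ copies of the standard radius-one Hopf normal form, the $k$-th centred at $(a_k,b_k)$ and divided by the localising weight $1+(x-a_k)^6+(y-b_k)^6$. With the centres sufficiently far apart, Poincar\'e--Bendixson on $K$ disjoint annuli yields $K$ stable limit cycles. This is then polynomialised by adjoining one fast scalar $v_k$ per weight via $\varepsilon\,\dot v_k=1-v_k\big[1+(x-a_k)^6+(y-b_k)^6\big]$, and Tikhonov's theorem carries the cycles into the $(K+2)$-dimensional system for small~$\varepsilon$. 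The kinetic sign condition is enforced by the $x$-factorable transformation (multiply the $\dot x$ equation by $x$ and the $\dot y$ equation by $y$); a repeat of the Poincar\'e--Bendixson/Tikhonov argument shows the cycles survive. The bounds $n\le 7$ and $M(K)\le 29K$ then fall out of a single explicit expansion of the right-hand side --- no averaging, no hysteresis tuning, no self-consistency conditions.

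Your scheme, by contrast, rests on two steps you correctly flag as delicate but do not resolve. First, a slow bistable switch driven by the \emph{instantaneous} signal $u^2+v^2$ does not automatically track its time average: near a fold, excursions of the oscillating drive can push the switch across even when the mean lies on the safe side, so your staircase self-consistency requires a genuine averaging argument with explicit control of hysteresis widths versus oscillation amplitude, and none is supplied. Second --- and this is the real gap for part~(ii) --- the bounds ``order at most seven'' and ``$M(K)\le 29K$'' are asserted but nowhere computed. A Schl\"ogl core, a sensing term in $u^2+v^2$, and feedback into the oscillator must all coexist in a sign-compatible degree-seven polynomial with at most $29$ reactions per block; that is precisely the content of~(ii), and you have exhibited no prototype gadget against which to check it. In the paper's route those numbers are not an obstacle to be threaded but a by-product of one explicit formula.
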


\noindent
Theorem~\ref{thmcrn1} provides a stronger result than finding $K$ limit cycles in a polynomial ODE system of the form~$(\ref{genODE})$, because not every polynomial ODE system corresponds to a CRN and, therefore, the set of reaction rate equations is a proper subset of ODEs with polynomial right-hand sides. To make the existence of $K$ limit cycles possible while restricting to polynomials of degree at most $n=7$, we allow for more than two chemical species, replacing the two ODE system~(\ref{xode})--(\ref{yode}) by a more general ODE system~(\ref{genODE}) with $N(K)$ equations. In particular, the next important question is how small the CRN can be so that it has $K$ limit cycles. Our answer is partially given in part (ii) of Theorem~\ref{thmcrn1} where we provide upper bounds on the number of chemical species involved and the number of chemical reactions (of at most seventh order). Another important parameter to consider is the maximum order of the chemical reactions involved, i.e. the degree $n$ of the polynomials on the right-hand side of ODE system~(\ref{genODE}). Since systems of at most second-order reactions (the case $n=2$) is of special interest in the theory of CRNs and applications~\cite{Wilhelm:2000:CSC}, we state our second main result as:

\begin{theorem}
\label{thmcrn2}
Let $K$ be an arbitrary positive integer. Then there exists a CRN with $N(K)$ chemical species which are subject to $M(K)$ chemical reactions of at most second order such that \hfill\break
\rule{0pt}{1pt} \hskip 3mm {\rm (i)} reaction rate equations~$(\ref{genODE})$ have at least $K$ stable limit cycles,
\hfill\break 
\rule{0pt}{1pt} \hskip 3mm {\rm (ii)} we have $N(K) \le 7K+14$ and $M(K) \le 42 \, K + 24$.
\end{theorem}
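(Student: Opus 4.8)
The plan is to adapt the CRN underlying Theorem~\ref{thmcrn1} by trading every reaction of order three or higher for a bundle of at most second-order reactions, in such a way that the $K$ stable limit cycles survive. Write the reaction rate equations~(\ref{genODE}) of the seventh-order network of Theorem~\ref{thmcrn1} as $\dot{\mathbf x} = {\mathbf f}({\mathbf x})$ with $\deg {\mathbf f} \le 7$, and suppose (after a harmless normalisation built into that construction) that every reactant complex of order $m \ge 3$ is a power $m X_i$ of a single species. For each species $X_i$ occurring with such a multiplicity, introduce auxiliary ``multimer'' species $X_i^{(2)}, X_i^{(3)}, \dots, X_i^{(7)}$ and the reversible association reactions $2X_i \to X_i^{(2)}$ and $X_i + X_i^{(k-1)} \to X_i^{(k)}$, $k = 3, \dots, 7$, together with their reverse dissociation reactions, all endowed with rate constants of order $1/\varepsilon$ for a small parameter $\varepsilon$. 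Finally, rewrite each original reaction with reactant complex $m X_i$, $m \ge 3$, so that its reactant complex becomes $X_i + X_i^{(m-1)}$, leaving the product complex unchanged; every reaction of the new network is then of order at most two, and its mass-action propensity is a constant multiple of the original monomial.

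The key analytical step is a quasi-steady-state reduction. For each original species put $s_i := x_i + 2 x_i^{(2)} + 3 x_i^{(3)} + \dots + 7 x_i^{(7)}$, the total monomer count, which the fast association/dissociation block leaves invariant. Because that block is a reversible, acyclic (hence detailed-balanced) ``polymerisation'' network on each compact stoichiometric class, it has a unique positive equilibrium which is globally asymptotically stable, so the critical manifold $\{\, x_i^{(k)} = c_{i,k}\, x_i^{k} \,\}$ is normally hyperbolic and globally attracting. On it the relation $s_i = x_i \bigl( 1 + \sum_{k \ge 2} k\, c_{i,k}\, x_i^{k-1} \bigr)$ is a strictly increasing polynomial in $x_i \ge 0$, hence a smooth change of coordinates; and since the rewriting preserves, for every reaction, the net change of the $X_i$-monomer count on both sides, a direct computation shows that the reduced slow equations for $(s_i)$ are exactly $\dot{\mathbf x} = {\mathbf f}({\mathbf x})$ read in the coordinates $x_i = x_i(s_i)$. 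Thus the slow flow is smoothly conjugate to the reaction rate equations of the seventh-order CRN, which possesses $K$ hyperbolic stable limit cycles. Fenichel's theorem (geometric singular perturbation theory) then produces, for all sufficiently small $\varepsilon$, at least $K$ hyperbolic stable limit cycles of the full second-order network, and the global attractivity of the critical manifold guarantees that the reduction introduces no spurious attractors.

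To finish, one counts: there are at most $K+2$ original species, each contributing at most $6$ auxiliary species, so $N(K) \le 7(K+2) = 7K + 14$; the at most $29K$ original reactions are retained (now of order at most two) and each original species contributes at most $12$ association/dissociation reactions, which together give $M(K) \le 29K + 12(K+2) \le 42K + 24$. The precise bookkeeping — including the handling of any mixed higher-order reactant complexes present in the network of Theorem~\ref{thmcrn1} — is routine once the construction is fixed.

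I expect the main obstacle to be the faithfulness-and-robustness part of the second paragraph: one must choose the stoichiometries and rate constants of the added reactions so that (i) the quasi-steady-state manifold carries \emph{exactly} the target vector field ${\mathbf f}$, up to the coordinate change $s_i \leftrightarrow x_i$ — in particular the catalytic versus consumptive pattern of each rewritten reaction must reproduce the correct rate of change of every $s_i$ — and (ii) the critical manifold is \emph{globally}, not merely locally, attracting, so that the count of attractors is preserved in the singular limit. Once these two points are secured, persistence of the $K$ stable limit cycles is an immediate application of Fenichel's theorem; an alternative, self-contained route — rebuilding the oscillator, the multistable ``mode selector'', and the coupling of Theorem~\ref{thmcrn1} directly from bimolecular reactions only — would follow the same architecture at the same asymptotic cost.
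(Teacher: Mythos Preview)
Your reduction does not give a conjugate system. On the critical manifold of the fast association/dissociation block you correctly get $\dot s_i = f_i(x(s))$ for the conserved monomer counts $s_i = \phi_i(x_i)$; but the diagonal diffeomorphism $\phi$ does \emph{not} conjugate this to $\dot x_i = f_i(x)$. The pushforward of $f$ under $\phi$ has $i$-th component $\phi_i'(x_i)\,f_i(x)$, not $f_i(x)$, so in the original coordinates the slow flow reads $\dot x_i = f_i(x)/\phi_i'(x_i)$. Since the factors $\phi_i'(x_i) = 1 + \sum_{k\ge 2} k^2 c_{i,k}\,x_i^{k-1}$ differ from species to species, this is not even a time reparametrisation of the target field, and the phase portrait is genuinely altered. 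In the construction behind Theorem~\ref{thmcrn1} the relevant cycles sit near $(a_i,b_i)=(8iK,8iK)$, where these factors are far from~$1$ unless the equilibrium constants $c_{i,k}$ are driven to zero --- a further limit you have not taken, and one that would have to be argued separately and \emph{before} the Fenichel step. Your own diagnosis of the obstacle (the quasi-steady-state manifold must carry exactly the target vector field) is right; your construction does not meet it.

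The paper sidesteps this by making the auxiliary species \emph{catalytic} rather than part of a conservation law. Instead of reversible multimerisation $2X \rightleftharpoons X^{(2)}$, it uses production/degradation pairs such as $2X \to 2X + W_1$ and $W_1 \to \emptyset$ with rates $1/\delta$, so that on the fast manifold $w_1 = x^2$ while $x$ itself is untouched by the fast block. Each high-order monomial on the right-hand side of the seventh-order system is then replaced by a product of at most two of the variables $x,y,v_i,w_\ell,z_{i,j}$, and the reduced slow system is \emph{identically} the system of Theorem~\ref{thmcrn1} --- no coordinate change, so Tikhonov's theorem applies directly. This also disposes of the mixed reactant complexes such as $V_i + 2X + 2Y$ in~$\mathcal{R}_i^*$, which violate your single-species hypothesis and are not ``routine'' in your framework: one simply introduces one catalytic auxiliary per needed monomial (twelve shared $W_\ell$, plus six $Z_{i,j}$ per index $i$), and the explicit count comes to $7K+14$ species and $42K+24$ second-order reactions.
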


\noindent
By restricting to second-order (bimolecular) reactions, we obtain CRNs with more realistic second-order kinetics, but our construction increases the number of species and chemical reactions involved, as it can be seen by comparing parts (ii) of Theorems~\ref{thmcrn1} and~\ref{thmcrn2}. The precise definitions of CRNs, mass action kinetics, reaction rate equations and limit cycles in $N$-dimensional systems are given in Section~\ref{sec2}. 

In both Theorems~\ref{thmcrn1} and~\ref{thmcrn2}, we restrict our considerations to systems described by polynomial ODEs where the degree of polynomials is bounded by a constant independent of $K$, i.e. we consider polynomials of the degree at most $n=7$ (in Theorem~\ref{thmcrn1}) or at most $n=2$ (in Theorem~\ref{thmcrn2}), and we increase the number of chemical species, $N(K)$, as $K$ increases, to get $K$ stable limit cycles. Another approach is to restrict our considerations to chemical systems with only $N=2$ chemical species. In Section~\ref{sec8}, we construct two-species CRNs with $K$ stable limit cycles which include chemical reactions of at most $n(K)$-th order, where $n(K)=6K-2$. This establishes our third main result:

\begin{theorem}
\label{thmcrn3}
Let $C(n)$ be the maximum number of stable limit cycles of reaction rate equations~$(\ref{xode})$--$(\ref{yode})$ corresponding to CRNs with two chemical species undergoing chemical reactions of at most $n$-th order. Then we have
\begin{equation}
C(n) \ge \left\lfloor \frac{n+2}{6} \right\rfloor, 
\label{Cnbound}
\end{equation}
where the floor function $\lfloor . \rfloor$ denotes the integer part of a positive real number.
\end{theorem}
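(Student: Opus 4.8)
The plan is to produce, for every integer $K\ge 1$, a CRN with two chemical species whose reactions have order at most $6K-2$ and whose reaction rate equations have $K$ stable limit cycles; since $6K-2\le n$ when $K=\lfloor(n+2)/6\rfloor$, and since such a CRN also counts as a CRN with reactions of order at most~$n$, this yields the bound~$(\ref{Cnbound})$. The cases $K\le 1$ are immediate: the bound is trivial for $K=0$, and for $K=1$ (that is, $4\le n\le 9$) the two-species cubic CRN of~\cite{Plesa:2017:TMS} already has two stable limit cycles. So I assume $K\ge 2$ from now on.

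The first step is the classical radial construction of nested circular limit cycles. I would fix a real polynomial $G$ of degree $2K-1$ having $2K-1$ distinct simple positive roots $0<w_1<\dots<w_{2K-1}$ with $G(0)>0$ (for instance $G(w)=-\prod_{j=1}^{2K-1}(w-j)$), and consider the auxiliary planar system $\dot u=-v+u\,G(u^2+v^2)$, $\dot v=u+v\,G(u^2+v^2)$. In polar coordinates this reads $\dot r=r\,G(r^2)$, $\dot\theta=1$, so the circles $r=\sqrt{w_i}$ are hyperbolic limit cycles, alternately stable and unstable, and exactly the $K$ circles $r=\sqrt{w_1},\sqrt{w_3},\dots,\sqrt{w_{2K-1}}$ are stable; the right-hand sides are polynomials of degree $1+2(2K-1)=4K-1$.

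The second step moves this picture into the open positive quadrant and makes it realisable by mass action kinetics. The substitution $u=x-a$, $v=y-a$ with $a>\sqrt{w_{2K-1}}$ places all of these circles inside $\{x>0,\ y>0\}$ without changing the degree, giving a polynomial system $\dot x=\widetilde f(x,y)$, $\dot y=\widetilde g(x,y)$ of degree $4K-1$ with $K$ stable limit cycles in the open quadrant. I would then multiply both equations by $xy$, obtaining $\dot x=xy\,\widetilde f(x,y)$, $\dot y=xy\,\widetilde g(x,y)$, of degree $4K+1\le 6K-2$. On $\{x>0,\ y>0\}$ the factor $xy$ is smooth and positive, so this is only a time reparametrisation there: orbits, periodic circles, and their basins of attraction inside the quadrant are unchanged, hence all $K$ limit cycles persist with their stability. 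Moreover every monomial of $xy\,\widetilde f$ is divisible by $x$ and every monomial of $xy\,\widetilde g$ is divisible by $y$, so the standard realisability criterion --- every monomial with a negative coefficient in the equation for $x_i$ is divisible by $x_i$ --- is satisfied, and in the natural CRN realisation a monomial of total degree $d$ becomes a reaction of order $d$, hence of order at most $4K+1\le 6K-2$. This establishes $C(6K-2)\ge K$ and thereby the bound~$(\ref{Cnbound})$.

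The crux is the competition in the last step: the reaction rate field must carry the ``kinetic'' sign pattern forced by mass action, while its degree --- equivalently the maximal reaction order --- must remain linear in $K$ with a controlled constant. Multiplying by $xy$ repairs the sign constraint at the price of two extra degrees and of confining the dynamics we care about to the open quadrant, and the points needing genuine care are: that this rescaling destroys no limit cycle and preserves the stability types; that concentrations stay positive and bounded along the surviving cycles; and that the reaction-order bookkeeping in the explicit network really stays at or below $6K-2$. A sharper choice of $G$, or of the multiplying monomial, can be used to tune the exponent, but the mechanism driving the bound is exactly this: concentric circular limit cycles produced by a radial polynomial, translated into the positive quadrant and rendered kinetic.
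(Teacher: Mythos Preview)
Your proposal is correct and follows a genuinely different route from the paper's proof in Section~\ref{sec8}. The paper reuses its earlier machinery: it takes the non-polynomial planar system~(\ref{xode2})--(\ref{yode2}) with $K$ \emph{separated} limit cycles around distinct centres $(a_i,b_i)$, clears denominators by multiplying through by $h(x,y)=\prod_k\bigl(1+(x-a_k)^6+(y-b_k)^6\bigr)$ to obtain polynomials of degree $6K-3$, and then applies the $x$-factorable transformation (multiply $\dot x$ by $x$ and $\dot y$ by $y$). Because the two equations are multiplied by \emph{different} factors, this last step is not a time reparametrisation, and the persistence of the limit cycles has to be checked separately (the paper relies on Lemma~\ref{lem5} for the analogous step). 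Your construction instead uses $K$ \emph{concentric} circular limit cycles produced by a single radial polynomial $G$ of degree $2K-1$, giving a system that is already polynomial of degree $4K-1$; after translating into the positive quadrant you multiply both equations by the \emph{same} factor $xy$, which is a genuine time reparametrisation on $\{x>0,\,y>0\}$, so persistence and stability of the cycles are immediate with no extra lemma needed.

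Two remarks. First, your approach actually yields degree $4K+1$, strictly smaller than the paper's $6K-2$ for $K\ge 2$, so your argument in fact proves the sharper bound $C(n)\ge \lfloor (n-1)/4\rfloor$ (and hence certainly~(\ref{Cnbound})). Second, the one place to be slightly more explicit is the passage from orbital equivalence to stability in the sense of Definition~\ref{deflc}: because the $K$ circles and small annular neighbourhoods of them lie in a compact subset of the open quadrant, $xy$ is bounded between positive constants there, so the reparametrised time tends to infinity with the original time and attraction is preserved. With that said, your outline is sound.
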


\noindent
To prove Theorems~\ref{thmcrn1}, \ref{thmcrn2} and~\ref{thmcrn3}, we first construct a planar system given by equations~(\ref{xode})--(\ref{yode}), where $f$ and $g$ are continuous non-polynomial functions chosen in such a way that the ODE system~(\ref{xode})--(\ref{yode}) has $K$ stable limit cycles in the positive quadrant $[0,\infty) \times [0,\infty)$. Such a planar non-polynomial ODE system is constructed in Section~\ref{sec3}. In Section~\ref{sec4}, we then increase the number of chemical species from~2 to $N(K$) to transform the non-polynomial ODE system to a polynomial one. In Section~\ref{sec5}, we modify this construction by using an $x$-factorable transformation to arrive at reaction rate equations corresponding to a CRN~\cite{Samardzija:1989:NCK}. Theorem~\ref{thmcrn1} is then proven in Section~\ref{sec6} by showing that the reaction rate equations have at least $K$ stable limit cycles. This is followed by our proof of Theorems~\ref{thmcrn2} and~\ref{thmcrn3} in Sections~\ref{sec7} and~\ref{sec8}, respectively.

\section{Notation and mathematical terminology}
\label{sec2}

\begin{definition}
\label{defcrn}
A {\it chemical reaction network} (CRN) is defined as a collection $(\mathcal{S},\mathcal{C},\mathcal{R})$ consisting of chemical species~$\mathcal{S}$, reaction complexes~$\mathcal{C}$ and chemical reactions~$\mathcal{R}$. We denote by $N$ the number of chemical species and by $M$ the number of chemical reactions, i.e. $\vert\mathcal{S}\vert=N$ and $\vert\mathcal{R}\vert=M$. Each chemical reaction is of the form
\begin{equation}
\label{eq:reaction}
\sum_{i=1}^N \nu_{i,j} X_i 
\;\, \longrightarrow \;\, 
\sum_{i=1}^N \nu'_{i,j} X_i,
\qquad \mbox{for} \; j=1,2,\dots,M,
\end{equation}
where $X_i,$ $i=1,2,\dots,N$, are chemical species, and linear combinations 
$\sum_{i=1}^N \nu_{i,j} X_i$ and $\sum_{i=1}^N \nu'_{i,j} X_i$ of species with non-negative integers $\nu_{i,j}$ and $\nu'_{i,j}$ are reaction complexes.
\end{definition}

%\newpage

\begin{definition}
\label{defrre}
Let $(\mathcal{S},\mathcal{C},\mathcal{R})$ be a CRN with $N$ chemical species and $M$ chemical reactions. Let $x_i(t)$ be the concentration of chemical species $X_i \in \mathcal{S}$, $i=1,2,\dots,N$. The time evolution of $x_i(t)$ is, under the assumption of the mass action kinetics, described by the {\it reaction rate equations}, which are written as a system of $N$ ODEs in the form
\begin{equation}
\frac{\mbox{d}x_i}{\mbox{d}t}(t)
=
\sum_{j=1}^M 
k_j
\,
(\nu'_{i,j} - \nu_{i,j})
\,
\prod_{\ell=1}^N x_\ell^{\nu_{\ell,j}},
\qquad
\mbox{for} \quad i=1,2,\dots,N,
\label{reactionrateequation}
\end{equation}
where $k_j$, $j=1,2,\dots,M$, is a positive constant called the reaction rate for the $j$-th reaction.
\end{definition}

\noindent
The reaction rate equations~(\ref{reactionrateequation}) are ODEs of the form~(\ref{genODE}), where the right-hand side ${\mathbf f}: {\mathbb R}^N \to {\mathbb R}^N$ is a vector of real polynomials. However, not every polynomial ODE system can be obtained as the reaction rate equations of a CRN as we formalize in the following Lemma.

\begin{lemma}
\label{lem1}
Consider a system of $N$ ODEs with polynomial right-hand sides describing the time evolution of $x_i(t)$, $i=1,2,\dots,N,$ in the form
\begin{equation}
\frac{\mbox{d}x_i}{\mbox{d}t}(t)
=
\sum_{j=1}^M 
\alpha_{i,j}
\,
\prod_{\ell=1}^N x_\ell^{\nu_{\ell,j}},
\qquad
\mbox{for} \quad i=1,2,\dots,N,
\label{reactionrateequationODEs}
\end{equation}
where $\alpha_{i,j}$ are real constants and $\nu_{i,j}$ are nonnegative integers, for $i=1,2,\dots,N$ and $j=1,2,\dots,M.$ Then the polynomial ODE system~$(\ref{reactionrateequationODEs})$ can be written as the reaction rate equations~$(\ref{reactionrateequation})$ of a CRN if and only if
\begin{equation}
\nu_{i,j} = 0
\quad
\mbox{implies}
\quad
\alpha_{i,j} \ge 0 
\quad
\mbox{for any $i=1,2,\dots,N$ 
and $j=1,2,\dots,M$}.
\label{nocrosstermcond}    
\end{equation}
\end{lemma}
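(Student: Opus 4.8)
The plan is to prove Lemma~\ref{lem1} by a direct construction establishing both implications, treating the condition~(\ref{nocrosstermcond}) as the precise obstruction to interpreting a monomial term $\alpha_{i,j}\prod_\ell x_\ell^{\nu_{\ell,j}}$ as a contribution of a mass action reaction. The key observation is that in the reaction rate equations~(\ref{reactionrateequation}) each monomial $\prod_\ell x_\ell^{\nu_{\ell,j}}$ comes from a reaction with reactant complex $\sum_i \nu_{i,j} X_i$, and that reaction contributes to $\mbox{d}x_i/\mbox{d}t$ with coefficient $k_j(\nu'_{i,j}-\nu_{i,j})$. Since $k_j>0$ and $\nu'_{i,j}\ge 0$, this coefficient can be an arbitrary real number when $\nu_{i,j}\ge 1$ (we may take $\nu'_{i,j}$ as large as needed for a negative coefficient, or larger than $\nu_{i,j}$ for a positive one), but when $\nu_{i,j}=0$ the coefficient $k_j\nu'_{i,j}$ is necessarily nonnegative. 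This is exactly condition~(\ref{nocrosstermcond}).

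For the ``only if'' direction I would argue by contraposition on a term-by-term basis: suppose~(\ref{reactionrateequationODEs}) is the reaction rate system of some CRN and some monomial indexed by $j$ has $\nu_{i,j}=0$ for a species $i$; then the coefficient of that monomial in $\mbox{d}x_i/\mbox{d}t$ is a sum of terms $k_{j'}(\nu'_{i,j'}-\nu_{i,j'})$ over all reactions $j'$ whose reactant complex equals $(\nu_{1,j},\dots,\nu_{N,j})$, and for each such reaction $\nu_{i,j'}=\nu_{i,j}=0$, so each term equals $k_{j'}\nu'_{i,j'}\ge 0$; hence $\alpha_{i,j}\ge 0$. A small bookkeeping point is that one must first collect~(\ref{reactionrateequationODEs}) so that distinct $j$ correspond to distinct monomials (or, more carefully, phrase the statement so that the coefficient of each monomial $\prod_\ell x_\ell^{m_\ell}$ with $m_i=0$ is nonnegative), since the decomposition into reactions need not match the given indexing.

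For the ``if'' direction, assuming~(\ref{nocrosstermcond}), I would build a CRN realizing~(\ref{reactionrateequationODEs}) by introducing, for each pair $(i,j)$ with $\alpha_{i,j}\ne 0$, one reaction with reactant complex $\sum_\ell \nu_{\ell,j} X_\ell$ and product complex obtained by adjusting the $X_i$ stoichiometry: if $\alpha_{i,j}>0$, take the product complex to be $\sum_\ell \nu_{\ell,j} X_\ell + X_i$ with rate $k=\alpha_{i,j}$; if $\alpha_{i,j}<0$ (which by~(\ref{nocrosstermcond}) forces $\nu_{i,j}\ge 1$), take the product complex to be $\sum_\ell \nu_{\ell,j} X_\ell - X_i$ (legitimate since $\nu_{i,j}\ge 1$) with rate $k=-\alpha_{i,j}>0$. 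Each such reaction contributes exactly $\pm k \prod_\ell x_\ell^{\nu_{\ell,j}} = \alpha_{i,j}\prod_\ell x_\ell^{\nu_{\ell,j}}$ to $\mbox{d}x_i/\mbox{d}t$ and contributes $0$ to all other equations, so summing over $(i,j)$ recovers~(\ref{reactionrateequationODEs}).

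I do not expect a serious obstacle here; the result is essentially a restatement of the sign structure of mass action kinetics and the only point requiring care is the bookkeeping about repeated or distinct monomials in~(\ref{reactionrateequationODEs}) and ensuring product complexes have nonnegative integer entries, which is precisely what~(\ref{nocrosstermcond}) guarantees. The ``only if'' direction is the one where one must be slightly attentive, since a given polynomial system may admit many CRN realizations and the argument must hold for all of them simultaneously; the contraposition above handles this cleanly by noting the sign of each contributing term is forced.
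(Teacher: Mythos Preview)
Your proposal is correct and follows essentially the same approach as the paper's proof: both directions are handled by the same explicit construction (for $\alpha_{i,j}>0$ take $\nu'_{i,j}=\nu_{i,j}+1$ with rate $\alpha_{i,j}$, for $\alpha_{i,j}<0$ take $\nu'_{i,j}=\nu_{i,j}-1$ with rate $-\alpha_{i,j}$), and the necessity follows from $k_j>0$, $\nu'_{i,j}\ge 0$. If anything, your treatment is slightly more careful than the paper's in flagging the bookkeeping issue that several reactions may share the same reactant monomial, which the paper's short proof leaves implicit.
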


\begin{proof}
The reaction rate equations~(\ref{reactionrateequation}) are of the form~(\ref{reactionrateequationODEs}). The non-negativity condition~(\ref{nocrosstermcond}) follows from $\nu_{i,j}=0$ and the non-negativity of both $k_j$ and $\nu'_{i,j}$ in equation~(\ref{reactionrateequation}). \hfill\break Conversely, if an ODE is of the form~(\ref{reactionrateequationODEs}) and $\alpha_{i,j}>0$, then we can choose $\nu'_{i,j}=\nu_{i,j}+1$ in equation~(\ref{reactionrateequation}) and put
the reaction rate as $k_j = \alpha_{i,j}$. On the other hand, if $\alpha_{i,j} < 0$, then the condition~(\ref{nocrosstermcond}) implies that
$\nu_{i,j} \ge 1$. Therefore, we can put
$\nu'_{i,j}=\nu_{i,j}-1$ and $k_j = -\alpha_{i,j}>0.$
\end{proof}

\noindent
In this paper, we prove the existence of limit cycles in chemical systems in Sections~\ref{sec6}, \ref{sec7} and~\ref{sec8} by proving the existence of limit cycles in systems of ODEs~(\ref{reactionrateequationODEs}) with polynomial right-hand sides satisfying the condition~(\ref{nocrosstermcond}). Then the approach used in the proof of Lemma~\ref{lem1} can be used to construct the corresponding CRN. However, the construction of a CRN corresponding to reaction rate equations is not unique. For example, consider a term of the form $-x_1^3$ on the right-hand side of equation~(\ref{reactionrateequationODEs}). Using the construction in the proof of Lemma~\ref{lem1}, it would correspond to the chemical reaction $3X \longrightarrow 2X$ with the rate constant equal to 1, but the same term can also correspond to the chemical reaction $3X \longrightarrow X$ with the rate constant equal to 1/2. We conclude this section by a formal definition of a stable limit cycle.

\begin{definition}
\label{deflc}
Consider a system of $N$ ODEs given by~(\ref{genODE}), where their right-hand side ${\mathbf f}: {\mathbb R}^N \to {\mathbb R}^N$ is continuous. A stable {\it limit cycle} is a trajectory $\mathbf{x}_{c}(t)$ for $t \in [0,\infty)$ such that \hfill\break
\rule{0pt}{1pt} \hskip 3mm {\rm (i)} 
$\mathbf{x}_{c}(t)$ is a solution of the ODE system~$(\ref{genODE})$,
\hfill\break 
\rule{0pt}{1pt} \hskip 3mm {\rm (ii)} 
there exists a positive constant $T>0$ such that
$\mathbf{x}_{c}(T)=\mathbf{x}_{c}(0)$ and \hfill\break 
\rule{0pt}{1pt} \hskip 10mm 
$\mathbf{x}_{c}(t) \ne \mathbf{x}_{c}(0)$ for $0 < t < T,$
\hfill\break 
\rule{0pt}{1pt} \hskip 3mm {\rm (iii)} 
there exists $\varepsilon>0$ such that
\hfill\break 
\rule{0pt}{1pt} \hskip 10mm 
$\mbox{dist}\{\mathbf{x}(0),\mathbf{x}_{c}\}
<\varepsilon$ 
\hskip 3mm
implies 
\hskip 3mm
$\mbox{dist}\{\mathbf{x}(t),\mathbf{x}_{c}\}
\to 0$ as $t \to \infty.$
\end{definition}

\noindent
In Definition~\ref{deflc}, constant $T$ is the period of the limit cycle and the property (iii) states that the limit cycle attracts all trajectories which start sufficiently close to it. The distance in the property (iii) of Definition~\ref{deflc} is the Euclidean distance defined by
$$
\mbox{dist}\{\mathbf{z},\mathbf{x}_{c}\}
=
\min_{t \in [0,T]}
\mbox{dist}\{\mathbf{z},\mathbf{x}_{c}(t)\}
=
\min_{t \in [0,T]}
\left(
\sum_{i=1}^N
\left(
z_i - x_{c,i}(t)
\right)^2
\right)^{1/2}
$$
for $\mathbf{z} = [z_1,z_2,\dots,z_N] \in {\mathbb{R}^N}$
and $\mathbf{x}_{c}(t) 
= [x_{c,1}(t),x_{c,2}(t),\dots,x_{c,N}(t)] \in {\mathbb{R}^N}$.

\section{Planar ODE systems with arbitrary number of limit cycles}
\label{sec3}

In this section, we construct a planar ODE system of the form~(\ref{xode})--(\ref{yode}) with $K$ limit cycles in the positive quadrant. It is constructed as a function of $2K$ parameters denoted by $a_1,$ $a_2,$ $\dots,$ $a_K$ and 
$b_1,$ $b_2,$ $\dots,$ $b_K,$ as
\begin{eqnarray}
\frac{\mbox{d}x}{\mbox{d}t} 
\!&=&\! 
\sum_{k=1}^K 
\frac{(x-a_k)
\big\{1-(x-a_k)^2-(y-b_k)^2\big\}
-(y-b_k)}{
1 + (x-a_k)^6 + (y-b_k)^6} 
\,=f(x,y)\,, \quad\; 
\label{xode2}\\
\frac{\mbox{d}y}{\mbox{d}t} 
\!&=&\! 
\sum_{k=1}^K 
\frac{(y-b_k)
\big\{1-(x-a_k)^2-(y-b_k)^2\big\}
+(x-a_k)}{
1 + (x-a_k)^6 + (y-b_k)^6} 
\,=g(x,y)\,. \quad\;
\label{yode2}
\end{eqnarray}
An illustrative dynamics of the ODE system~(\ref{xode2})--(\ref{yode2}) is shown in Figure~\ref{fig1}(a) for $K=4$,
\begin{figure}[t]%
(a) \hskip 5.86cm (b) \hfill\break
\centerline{
\includegraphics[width=0.495\textwidth]{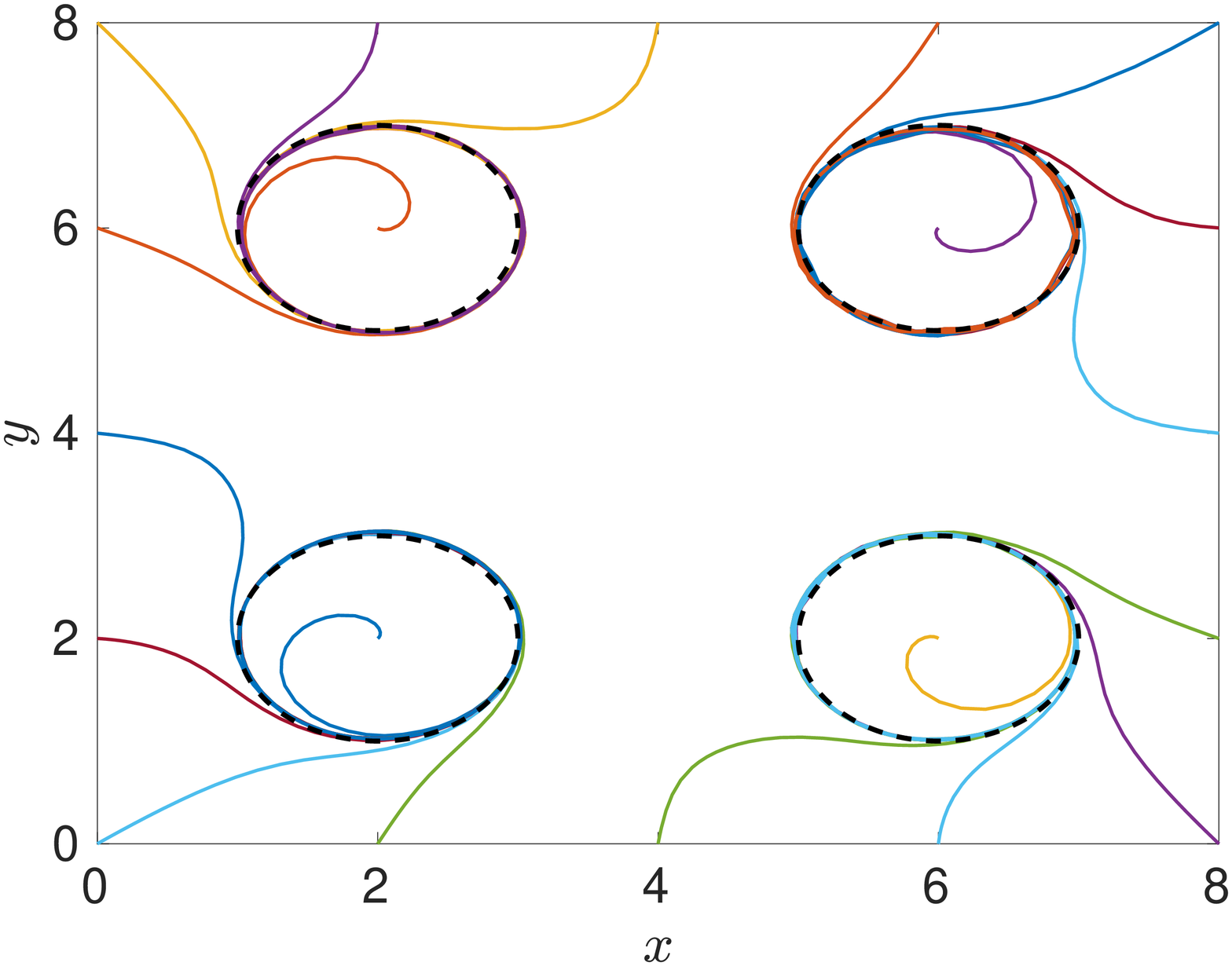}\;
\includegraphics[width=0.495\textwidth]{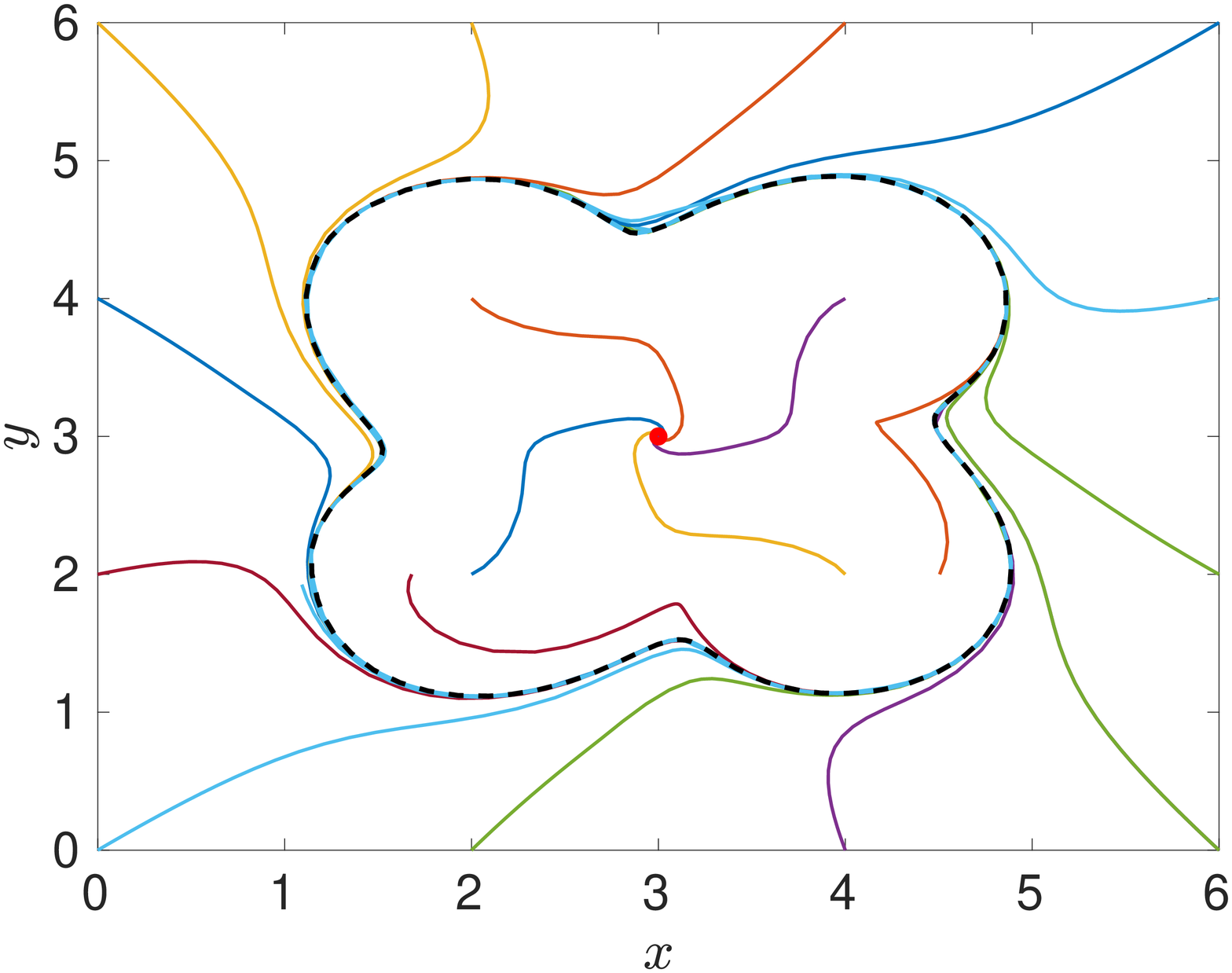}}
\caption{(a) {\it Twenty illustrative trajectories of the ODE system~$(\ref{xode2})$--$(\ref{yode2})$ for $K=4$ and the parameter choices $a_1 = b_1 = a_2 = b_3 = 2$ and $a_3 = b_2 = a_4 = b_4 = 6$. As $t \to \infty$, all presented trajectories approach one of the four limit cycles, which are plotted as the black dashed lines.} \hfill\break (b) {\it Twenty illustrative trajectories of the ODE system~$(\ref{xode2})$--$(\ref{yode2})$ for $K=4$ and the parameter choices $a_1 = b_1 = a_2 = b_3 = 2$ and $a_3 = b_2 = a_4 = b_4 = 4$. As $t \to \infty$, some trajectories converge to the stable limit cycle denoted by the black dashed line, while some trajectories, which started inside the limit cycle converge to the stable fixed point denoted as the red dot.}
}\label{fig1}
\end{figure}%
where the ODE system has four limit cycles, which is highlighted in Figure~\ref{fig1}(a) by plotting some representative trajectories. The existence of $K$ stable limit cycles for the ODE system~(\ref{xode2})--(\ref{yode2}) can also be proven analytically, as it is done in  Lemma~\ref{lem3}. In Figure~\ref{fig1}(a), we have presented an example with $K=4$ and parameter choices
$$
(a_1,b_1) = (2,2),
\quad
(a_2,b_2) = (2,6),
\quad
(a_3,b_3) = (6,2)
\quad
\mbox{and}
\quad 
(a_4,b_4) = (6,6). 
$$
In particular, the distance between points $(a_i,b_i)$, $i=1,2,3,4,$ is at least four. If we decrease this distance, then the ODE system~(\ref{xode2})--(\ref{yode2}) can have less limit cycles. This is highlighted in Figure~\ref{fig1}(b), where we present an example with $K=4$ and parameter choices
$$
(a_1,b_1) = (2,2),
\quad
(a_2,b_2) = (2,4),
\quad
(a_3,b_3) = (4,2)
\quad
\mbox{and}
\quad 
(a_4,b_4) = (4,4). 
$$
In Figure~\ref{fig1}(b), we observe that there is only one limit cycle, denoted as the black dashed line. This limit cycle is stable and a number of illustrative trajectories converge to this limit cycle as $t \to \infty$. However, there is also a stable equilibrium point at $(3,3)$, which attracts some of the trajectories. In particular, we can only expect that the ODE system~(\ref{xode2})--(\ref{yode2}) will have $K$ stable limit cycles provided that points $(a_i,b_i)$ are sufficiently separated. This is proven in our next lemma.

\begin{lemma}
\label{lem3}
Let us assume that
\begin{equation}
(a_i-a_j)^2+(b_i-b_j)^2 > 
15\left(K^{2/3}+2\right)
\qquad
\mbox{for all} \quad i \ne j,
\label{sepass}
\end{equation}
where $i,j=1,2,\dots,K.$ Then the ODE system~$(\ref{xode2})$--$(\ref{yode2})$ has at least $K$ stable limit cycles.
\end{lemma}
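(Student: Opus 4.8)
The plan is to work near each centre $(a_k,b_k)$ separately, treating the $k$-th summand of $(f,g)$ as the dominant term and the remaining $K-1$ summands as a small perturbation whose size is controlled by the separation hypothesis~(\ref{sepass}).

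\emph{Local polar coordinates.} Fix $k\in\{1,\dots,K\}$ and set $x-a_k=\rho_k\cos\phi_k$, $y-b_k=\rho_k\sin\phi_k$. A direct computation (using $\rho_k\dot\rho_k=(x-a_k)\dot x+(y-b_k)\dot y$ and $\rho_k^2\dot\phi_k=(x-a_k)\dot y-(y-b_k)\dot x$) shows that the $k$-th summand contributes exactly $\rho_k(1-\rho_k^2)/D_k$ to $\dot\rho_k$ and $1/D_k$ to $\dot\phi_k$, where $D_k=1+(x-a_k)^6+(y-b_k)^6$ satisfies $1\le D_k\le 1+\rho_k^6$. Thus, in isolation, the $k$-th summand is the textbook system with a single attracting unit circle about $(a_k,b_k)$, up to the strictly positive time change by $D_k$ (which alters neither trajectories nor limit cycles).

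\emph{The other summands are a small perturbation on a fixed annulus.} Let $A_k=\{\,r_1\le\rho_k\le r_2\,\}$ with, say, $r_1=\tfrac12$ and $r_2=\tfrac32$. For $(x,y)\in A_k$ and $j\ne k$, writing $s_j$ for the distance from $(x,y)$ to $(a_j,b_j)$ and $d_{kj}^2=(a_k-a_j)^2+(b_k-b_j)^2$, one has $s_j\ge d_{kj}-r_2$, $(x-a_j)^6+(y-b_j)^6\ge\tfrac14 s_j^6$, and the numerator vector of the $j$-th summand has norm $s_j\sqrt{(s_j^2-1)^2+1}$. By Cauchy--Schwarz, the contribution of the $j$-th summand to each of $\dot\rho_k$ and $\dot\phi_k$ on $A_k$ is $O(s_j^{-3})$. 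Summing over $j\ne k$ and using that by~(\ref{sepass}) the points $(a_j,b_j)$ are $d_{\min}$-separated with $d_{\min}^2>15(K^{2/3}+2)$ --- so that only $O(m)$ of them lie in any annulus of radii $[m\,d_{\min},(m+1)d_{\min}]$ about $(a_k,b_k)$, whence $\sum_{j\ne k}s_j^{-3}=O(d_{\min}^{-3})$ --- the total perturbation of $(\dot\rho_k,\dot\phi_k)$ on $A_k$ is forced by~(\ref{sepass}) to be strictly smaller than each of $\tfrac{r_1(1-r_1^2)}{1+r_1^6}$ (the restoring value of $\dot\rho_k$ on $\rho_k=r_1$), $\tfrac{r_2(r_2^2-1)}{1+r_2^6}$ (on $\rho_k=r_2$), and $\tfrac{1}{1+r_2^6}$ (the minimum over $A_k$ of the $k$-th summand's contribution to $\dot\phi_k$). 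Hence, for the full system~(\ref{xode2})--(\ref{yode2}) on $A_k$: $\dot\phi_k>0$ everywhere, $\dot\rho_k>0$ on $\{\rho_k=r_1\}$, and $\dot\rho_k<0$ on $\{\rho_k=r_2\}$.

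\emph{A stable limit cycle in each $A_k$, and distinctness.} It follows that $A_k$ is positively invariant and contains no equilibrium (an equilibrium would need $\dot\phi_k=0$). The radial segment $\Sigma_k=\{\phi_k\equiv 0,\ r_1\le\rho_k\le r_2\}$ is transverse to the flow, and since $\dot\phi_k$ is bounded below by a positive constant on the invariant set $A_k$, every trajectory from $\Sigma_k$ returns to $\Sigma_k$ in uniformly bounded time; so the Poincar\'e return map $P_k:[r_1,r_2]\to[r_1,r_2]$ is well defined, continuous, orientation-preserving (hence monotone increasing), with $P_k(r_1)>r_1$ and $P_k(r_2)<r_2$. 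Because $f$ and $g$ are real-analytic (the denominators in~(\ref{xode2})--(\ref{yode2}) are everywhere $\ge 1$), $P_k$ is real-analytic, so $P_k(\rho)-\rho$ has finitely many zeros in $[r_1,r_2]$; being positive near $r_1$ and negative near $r_2$, it has a zero $\rho_k^\star$ at which it changes sign from $+$ to $-$, and monotonicity of $P_k$ then makes $\rho_k^\star$ an asymptotically stable fixed point (one-sided iterates are monotone and converge to $\rho_k^\star$). The periodic orbit $\gamma_k$ through the corresponding point of $\Sigma_k$ lies in $\mathrm{int}(A_k)$ and is a stable limit cycle in the sense of Definition~\ref{deflc}, a flow-tube around it furnishing the attracting neighbourhood. (Equivalently: Poincar\'e--Bendixson yields a periodic orbit in $A_k$, and the same return-map argument upgrades it to a stable one.) Finally, for $K\ge2$ hypothesis~(\ref{sepass}) gives $d_{\min}^2>15\cdot3=45>9=(2r_2)^2$, so the closed annuli $A_1,\dots,A_K$ are pairwise disjoint and the limit cycles $\gamma_1,\dots,\gamma_K$ are distinct; for $K=1$ there is a single annulus and a single limit cycle.

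\emph{Expected main obstacle.} The only delicate step is the quantitative perturbation estimate in the second paragraph: a naive bound summing $K-1$ copies of the worst-case single-term estimate $\sim C/d_{\min}^3$ yields $\sim CK/d_{\min}^3$, which with $d_{\min}^2\sim 15K^{2/3}$ is merely a constant, and one would have to verify that this constant beats the (rather small) quantity $1/(1+r_2^6)$; the clean route is the two-dimensional packing count above, which turns the sum into $O(d_{\min}^{-3})\to0$ and makes the constant $15$ and the additive $+2$ in~(\ref{sepass}) do their work uniformly for every $K\ge1$. A secondary point is to pick the annulus radii $r_1<1<r_2$ so that the two radial conditions on $\partial A_k$ and the angular condition throughout $A_k$ hold at once; the particular choice $r_1=\tfrac12$, $r_2=\tfrac32$ is convenient but not essential. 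Everything else (Poincar\'e--Bendixson, analyticity of the return map, extraction of a genuinely two-sided stable fixed point, disjointness of the annuli) is routine.
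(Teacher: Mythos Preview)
Your approach is essentially the same as the paper's: construct a trapping annulus around each centre $(a_k,b_k)$, show the $k$-th summand dominates there, and apply Poincar\'e--Bendixson. The paper uses annuli $\{1/2<\rho_k^2<2\}$, checks the radial scalar product $(x-a_k,y-b_k)\cdot(f,g)$ on both boundary circles, and rules out equilibria in the interior by showing $\max(|f|,|g|)\ge\sqrt{2}/9$ there.

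Two technical differences are worth noting. First, for the perturbation bound the paper does \emph{not} use your packing argument: it uses exactly the naive bound $\sum_{j\ne k}|f_j|\le 4(K-1)/c^3$ with $c>\sqrt{15(K^{2/3}+2)}-\sqrt{2}$, and then verifies directly that this constant beats the required thresholds (e.g.\ $\sqrt{2}/9$, $2/9$). So the exponent $2/3$ on $K$ in~(\ref{sepass}) is precisely what makes $(K-1)/c^3$ a fixed constant, and the factor $15$ and the additive $+2$ are tuned so that this constant is small enough. Your packing improvement to $O(d_{\min}^{-3})$ is correct and more robust, but unnecessary here; the ``obstacle'' you anticipated is exactly what the paper works through by hand.

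Second, your treatment of stability via the analytic return map is more careful than the paper's, which simply asserts that Poincar\'e--Bendixson yields a \emph{stable} limit cycle in each $\Omega_i$ without spelling out why one of the periodic orbits must be two-sided attracting. Your argument (finitely many fixed points of $P_k$ by analyticity, a sign change of $P_k(\rho)-\rho$ from $+$ to $-$, monotonicity of $P_k$) fills this in cleanly.
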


\begin{proof}
We define the sets
\begin{equation}
\Omega_i
=
\left\{
(x,y) \, : \,
1/2 < (x-a_i)^2 + (y-b_i)^2 < 2
\right\},
\quad\;
\mbox{for}
\;\; i=1,2,\dots,K.
\quad
\label{omegaidef}
\end{equation}
Then the condition~(\ref{sepass}) implies that 
$$
\Omega_i \cap \Omega_j = \emptyset,
\qquad
\mbox{for all} \quad i \ne j, 
\quad
\mbox{where}
\quad i,j=1,2,\dots,K,
$$
i.e. the sets $\Omega_i$ are pairwise disjoint sets. We will show that each of them contains at least one stable limit cycle. The boundary of $\Omega$ consists of two parts: outer and inner circles defined by
\begin{equation}
\partial \Omega_{i1} = \left\{ 
(x,y): (x - a_i)^2 + (y - b_i)^2 = 2
\right\}
\label{boundary1}
\end{equation}
and
\begin{equation}
\partial \Omega_{i2} = \left\{ 
(x,y): (x - a_i)^2 + (y - b_i)^2 = 1/2
\right\},
\label{boundary2}
\end{equation}
respectively, that is, $\partial\Omega_i =\partial \Omega_{i1} \cup \partial \Omega_{i2}$.
Define the following functions for $k=1,2,\dots,K$: 
\begin{eqnarray}
f_k(z_1,z_2)
&=&
\frac{ z_1 
\big\{
1-z_1^2-z_2^2
\big\}
-z_2}{
1 + z_1^6 + z_2^6
}, \label{f_k}\\
g_k(z_1,z_2)
&=&
\frac{z_2
\big\{1-z_1^2-z_2^2\big\}
+z_1}{
1 + z_1^6 + z_2^6}.\label{g_k}
\end{eqnarray}
Then, the ODE system~(\ref{xode2})--(\ref{yode2}) can be rewritten as
\begin{eqnarray}
\frac{\mbox{d}x}{\mbox{d}t} 
\!&=&\! 
f(x,y)
\,, \quad\; 
\label{xode2_2}\\
\frac{\mbox{d}y}{\mbox{d}t} 
\!&=&\! 
g(x,y)
\,, \quad\;
\label{yode2_2}
\end{eqnarray}
where
\begin{equation}
f(x,y)
=
\sum_{k=1}^K 
f_k(x-a_k,y-b_k)
\quad
\mbox{and}
\quad
g(x,y)
=
\sum_{k=1}^K g_k(x-a_k,y-b_k).
\label{fgdef}
\end{equation}
First, we will show that $\Omega_i$ for $i=1,2,\dots, K$ does not contain any equilibrium points. Let us consider any point $(x^*,y^*)\in\Omega_i$. Substituting  
\begin{equation}
x^*=a_i+r\cos{\theta},
\quad
y^*=b_i+r\sin{\theta},
\label{polar_equilibrium}
\end{equation}
in the terms for $k=i$ in (\ref{fgdef}), we obtain
\begin{eqnarray}
f(x^*,y^*)
\!&=&\! 
\frac{r\cos{\theta} \, \{1-r^2\}-r\sin{\theta}}{1+r^6\cos^6{\theta}+r^6\sin^6{\theta}}
\,
+
\sum_{k=1,k\neq i}^K 
f_k(x^*-a_k,y^*-b_k)
\,, \quad\; 
\label{xode2_3}\\
g(x^*,y^*) 
\!&=&\! 
\frac{r\sin{\theta}\, \{1-r^2\}+r\cos{\theta}}{1+r^6\cos^6{\theta}+r^6\sin^6{\theta}}
\,
+
\sum_{k=1,k\neq i}^K g_k(x^*-a_k,y^*-b_k)
\,. \quad\;
\label{yode2_3}
\end{eqnarray}
The first terms in (\ref{xode2_3})--(\ref{yode2_3}) can be rewritten as 
\begin{equation}
\frac{4 r\sqrt{(r^2-1)^2+1}\,\,
\sin(\theta+\tilde{\theta})}
{4 +r^6\left(4- 3 \sin^2{2\theta}\right)} \, ,
\label{ode2_4}
\end{equation}
where $\tilde{\theta}=\alpha$ with $\tan{\alpha}=r^2-1$  and $\pi/2 < \alpha < 3\pi/2$ in the case of~(\ref{xode2_3}) and $\tilde{\theta}=\alpha-\pi/2$ in the case of~(\ref{yode2_3}). Since we have
$$
\max
(
\vert\sin(\theta+\alpha)\vert,
\vert\sin(\theta+\alpha-\pi/2)\vert
)
>
1/\sqrt{2}
$$
for any $\theta$ and $\alpha$,
at least one of the two terms expressed in the form~(\ref{ode2_4}) is greater than 
$$
\frac{1}{\sqrt{2}}\frac{r\sqrt{(r^2-1)^2+1}}
{1+r^6} \, ,
$$
which has a minimum $\sqrt{2}/9$ when $1/2<r^2<2$. Therefore, at least one of
the absolute values of the $i$-th components, $f_i(x^*-a_i,y^*-b_i)$ and $g_i(x^*-a_i,y^*-b_i)$, in (\ref{xode2_3})--(\ref{yode2_3}) at any point $(x^*,y^*)\in\Omega_i$ is greater than equal to $\sqrt{2}/9$. Without loss of generality, we assume
$$
\vert f_i(x^*-a_i,y^*-b_i) \vert
\ge
\vert g_i(x^*-a_i,y^*-b_i) \vert.
$$
Then we have  $\vert f_i(x^*-a_i,y^*-b_i) \vert \ge \sqrt{2}/9$.
We want to show that the second term in (\ref{xode2_3}) (i.e. the sum) has a smaller magnitude than the first term $f_i(x^*-a_i,y^*-b_i)$ so that we could conclude that $f(x^*,y^*)\neq 0$.
The $k$-th component in the second term in (\ref{xode2_3}) is bounded by
\begin{eqnarray}
\left\vert f_k(z_1,z_2)\right\vert 
&\le &
 \frac{\vert z_1\vert \,
 \vert 1-z_1^2-z_2^2\vert +\vert z_2\vert}{\vert 1+z_1^6+z_2^6\vert }
\label{fk_est1}
\end{eqnarray}
where $(z_1,z_2)=(x^*-a_k,y^*-b_k)$.
Denoting $c^2 = z_1^2+z_2^2$, we have
\begin{equation}
1+\frac{c^6}{4}\le 1+z_1^6+z_2^6\le 1+c^6. \label{c6_bound}
\end{equation}
Using
$\vert z_i\vert \le c$ and (\ref{c6_bound}), we estimate (\ref{fk_est1}) as
\begin{equation}
\left\vert f_k(z_1,z_2)\right\vert 
\le
\frac{c \left(\lvert 1-c^2\rvert +1\right)}{1+c^6/4 } \, . 
\label{fk_est2}
\end{equation}
Since $(x^*,y^*)\in \Omega_i$ and $(a_k,b_k)\in\Omega_k$ where $k\neq i$, our assumption~(\ref{sepass}) implies that $c^2\ge 2$. Thus, (\ref{fk_est2}) becomes
\begin{equation}
\left\vert f_k(z_1,z_2)\right\vert 
\le
\frac{c^3}{1+c^6/4} 
\le \frac{4}{c^3} \, . 
\label{fk_est3}
\end{equation}
Therefore, the magnitude of the second term in (\ref{xode2_3}) is bounded by $4(K-1)/c^3$.
Since $\vert f_i(x^*-a_i,y^*-b_i) \vert \ge \sqrt{2}/9$,
a sufficient condition
for $f(x^*,y^*)\neq 0$ 
is 
\begin{equation}
\frac{4(K-1)}{c^3} < \frac{\sqrt{2}}{9} \, .
\label{lemma3_condition1}
\end{equation}
Using the assumption~(\ref{sepass}), 
the distance  $c=\sqrt{(x^*-a_k)^2+(y^*-b_k)^2}$ is bounded by
\begin{equation}
c > \sqrt{(a_i-a_k)^2+(b_i-b_k)^2}-\sqrt{2}
> \sqrt{15\left(K^{2/3}+2\right)} -\sqrt{2} \, ,
\label{c_bound}
\end{equation}
which implies the sufficient condition~(\ref{lemma3_condition1}). 
Therefore, $(x^*,y^*)$ is not an equilibrium point.

Next, consider an arbitrary point $(x_b,y_b) \in \partial \Omega_{i1}$. Let us calculate the scalar product of vectors
\begin{equation}
(x_b-a_i,y_b-b_i)
\qquad
\mbox{and}
\qquad
\big(f(x_b,y_b), g(x_b,y_b)\big) \, .
\label{scalar_vectors}
\end{equation}
Using~(\ref{fgdef}), 
we obtain this scalar product as
\begin{eqnarray}
&& (x_b - a_i) 
f_i(x_b-a_i,y_b-b_i) 
+
(y_b - b_i)
g_i(x_b-a_i,y_b-b_i) \label{scalar_product}\\
&&\hspace{-1cm} +
(x_b - a_i) \!\!\!\!
\sum_{k=1,k \ne i}^K
f_k(x_b-a_k,y_b-b_k) 
+
(y_b - b_i) \!\!\!\!
\sum_{k=1, k \ne i}^K
g_k(x_b-a_k,y_b-b_k) \, . 
\nonumber
\end{eqnarray}
The first two terms in (\ref{scalar_product}) become 
$$
\frac{-2}{1+(x_b-a_i)^6+(y_b-a_i)^6} \, ,
$$
which has a magnitude greater than $2/9$ using (\ref{c6_bound}) with $c^2=(x_b-a_i)^2+(y_b-b_i)^2=2$.
Using (\ref{fk_est3}), $\vert x_b-a_i\vert \le \sqrt{2}$ and $\vert y_b-b_i\vert \le \sqrt{2}$, we can estimate the third and fourth terms in~(\ref{scalar_product}), namely, we have
\begin{equation}
\left\vert (x_b-a_i) \, f_k(z_1,z_2)\right\vert 
\le
\frac{4\sqrt{2}}{d^3} 
\quad
\mbox{and}
\quad
\left\vert (y_b-b_i) \, g_k(z_1,z_2)\right\vert 
\le
\frac{4\sqrt{2}}{d^3} \, ,
\label{fk_est4}
\end{equation}
where $d^2=(x_b-a_k)^2+(y_b-b_k)^2$.
Then the sum of the third and fourth terms in (\ref{scalar_product}) is bounded by
$8\sqrt{2}(K-1)/d^3$. Therefore, the sufficient condition that the scalar product in (\ref{scalar_product}) is negative is
\begin{equation}
\frac{8\sqrt{2}(K-1)}{d^3} < \frac{2}{9}.
\label{lemma3_condition2}
\end{equation}
Using the assumption~(\ref{sepass}), 
the distance  $d=\sqrt{(x_b-a_k)^2+(y_b-b_k)^2}$ is bounded by
\begin{equation}
d > \sqrt{(a_i-a_k)^2+(b_i-b_k)^2}-\sqrt{2}
> \sqrt{15\left(K^{2/3}+2\right)} -\sqrt{2} \, ,
\label{destimates}
\end{equation}
which implies the sufficient condition~(\ref{lemma3_condition2}). Therefore, the vector
$$
\big(f(x_b,y_b), g(x_b,y_b)\big)
$$
always points inside the domain $\Omega_i$ for each boundary point $(x_b,y_b) \in \partial \Omega_{i1}$. 

Similarly, for an arbitrary point $(x_b,y_b)\in \partial \Omega_{i2}$, we can show that the scalar product of vectors in (\ref{scalar_vectors}) is always positive due to that the sum of the first two terms in (\ref{scalar_product}) is equal to
$$
\frac{1/4}{1+(x_b-a_i)^6+(y_b-b_i)^6} \, ,
$$
which is greater than $2/9$ by using~(\ref{c6_bound})
with $c^2=1/2$, and the sum of the third and fourth terms in (\ref{scalar_product}) is bounded by
$8(K-1)/(d^3 \sqrt{2})$.  Therefore, the sufficient condition that the scalar product in (\ref{scalar_product}) is positive is
$$
\frac{8(K-1)}{d^3 \sqrt{2}} < \frac{2}{9} \, ,
$$
which is a weaker condition then the condition~(\ref{lemma3_condition2}), i.e. it is again
satisfied because of our assumption~(\ref{sepass}). This
implies that the scalar product in (\ref{scalar_product}) is positive. Thus, the directional vector always points inside the domain $\Omega_i$ on all parts of the boundary $\partial \Omega_i$.

Therefore, applying Poincar\'e-Bendixson theorem, we conclude that each $\Omega_i$ contains at least one stable limit cycle. Since $\Omega_i$, $i=1,2,\dots,K,$ are pairwise disjoint, this implies that the ODE system~(\ref{xode2})--(\ref{yode2}) has at least $K$ stable limit cycles.
\end{proof}

\section{ODE systems with polynomial right-hand sides and arbitrary number of limit cycles}
\label{sec4}

\noindent
Considering an auxiliary variable
\begin{equation}
u_i=\frac{1}{1 + (x-a_i)^6 + (y-b_i)^6},
\qquad
\mbox{for}
\quad
i=1,2,\dots,K,
\label{nonpolfactor}
\end{equation}
we can formally convert the non-polynomial ODE 
system~(\ref{xode2})--(\ref{yode2}) to a system of $(K+2)$ ODEs with polynomial right-hand sides~\cite{Kerner:1981:UFN}. We obtain
\begin{eqnarray}
\frac{\mbox{d}x}{\mbox{d}t} 
\!&=&\! 
\sum_{k=1}^K u_k \left[(x-a_k)
\big\{1-(x-a_k)^2-(y-b_k)^2\big\}
-(y-b_k)\right], \quad\; 
\label{xode3}\\
\frac{\mbox{d}y}{\mbox{d}t} 
\!&=&\! 
\sum_{k=1}^K u_k \left[(y-b_k)
\big\{1-(x-a_k)^2-(y-b_k)^2\big\}
+(x-a_k)\right], \quad\;
\label{yode3} \\
\frac{\mbox{d}u_i}{\mbox{d}t}
\!&=&\! 
- 6 u_i^2 (x - a_i)^5
\sum_{k=1}^K u_k \left[(x-a_k)
\big\{1-(x-a_k)^2-(y-b_k)^2\big\}
-(y-b_k)\right] \quad \; 
\nonumber\\
\!&-&\!
6 u_i^2 (y - b_i)^5
\sum_{k=1}^K u_k \left[(y-b_k)
\big\{1-(x-a_k)^2-(y-b_k)^2\big\}
+(x-a_k)\right], \quad\;
\label{uiode3}
\end{eqnarray}
for $i=1,2,\dots,K$. The dynamics of the original ODE system~~(\ref{xode2})--(\ref{yode2}) with initial condition $(x(0),y(0))=(x_0,y_0)$ is the same as the dynamics of the extended ODE system~(\ref{xode3})--(\ref{uiode3}), when we initialize the additional variables by
\begin{equation}
u_i(0) 
=
\frac{1}{1 + (x_0-a_i)^6 + (y_0-b_i)^6},
\qquad
\mbox{for}
\quad
i=1,2,\dots,K.
\label{initcond1}
\end{equation}
However, when we use a general initial condition,
$$
(x(0),y(0),u_1(0),u_2(0),\dots,u_K(0)) \in {\mathbb R}^{K+2},
$$
the trajectory of the extended ODE system~(\ref{xode3})--(\ref{uiode3}) may become unbounded and it may not converge to a limit cycle. To illustrate this behaviour, let us consider the initial condition 
\begin{equation}
u_i(0) 
=
\frac{c}{1 + (x_0-a_i)^6 + (y_0-b_i)^6},
\qquad
\mbox{for}
\quad
i=1,2,\dots,K,
\label{initcond2}
\end{equation}
where $c>0$ is a constant. If $c=1$, then the initial condition~(\ref{initcond2}) reduces to~(\ref{initcond1}). In particular, Figure~\ref{fig1}(a) shows an illustrative behaviour of both the extended ODE system~(\ref{xode3})--(\ref{uiode3}) for $c=1$ and the planar ODE system~(\ref{xode2})--(\ref{yode2}), assuming that we use a sufficiently accurate numerical method to solve ODEs~(\ref{xode3})--(\ref{uiode3}) and plot the projection of the calculated trajectory to the $(x,y)$-plane. Changing $c=1$ to $c=0.5$, we plot the dynamics of the extended ODE system in Figure~\ref{fig2}(a), where the black dots denote the end points of the calculated trajectories at the final time ($t=100$). We observe that only the trajectories which started `inside a limit cycle' (i.e. their projections to the $(x,y)$-plane are initially inside a black dashed circle) seem to converge to it, while the other trajectories do not seem to approach the `limit cycles'. This is indeed the case even if we continue these trajectories for times $t>100.$ In fact, depending on the accuracy of the numerical method used, all trajectories eventually stop somewhere in the phase plane, because $u_i(t) \to 0$ as $t \to \infty$. 

\begin{figure}[t]%
(a) \hskip 5.86cm (b) \hfill\break
\centerline{
\includegraphics[width=0.495\textwidth]{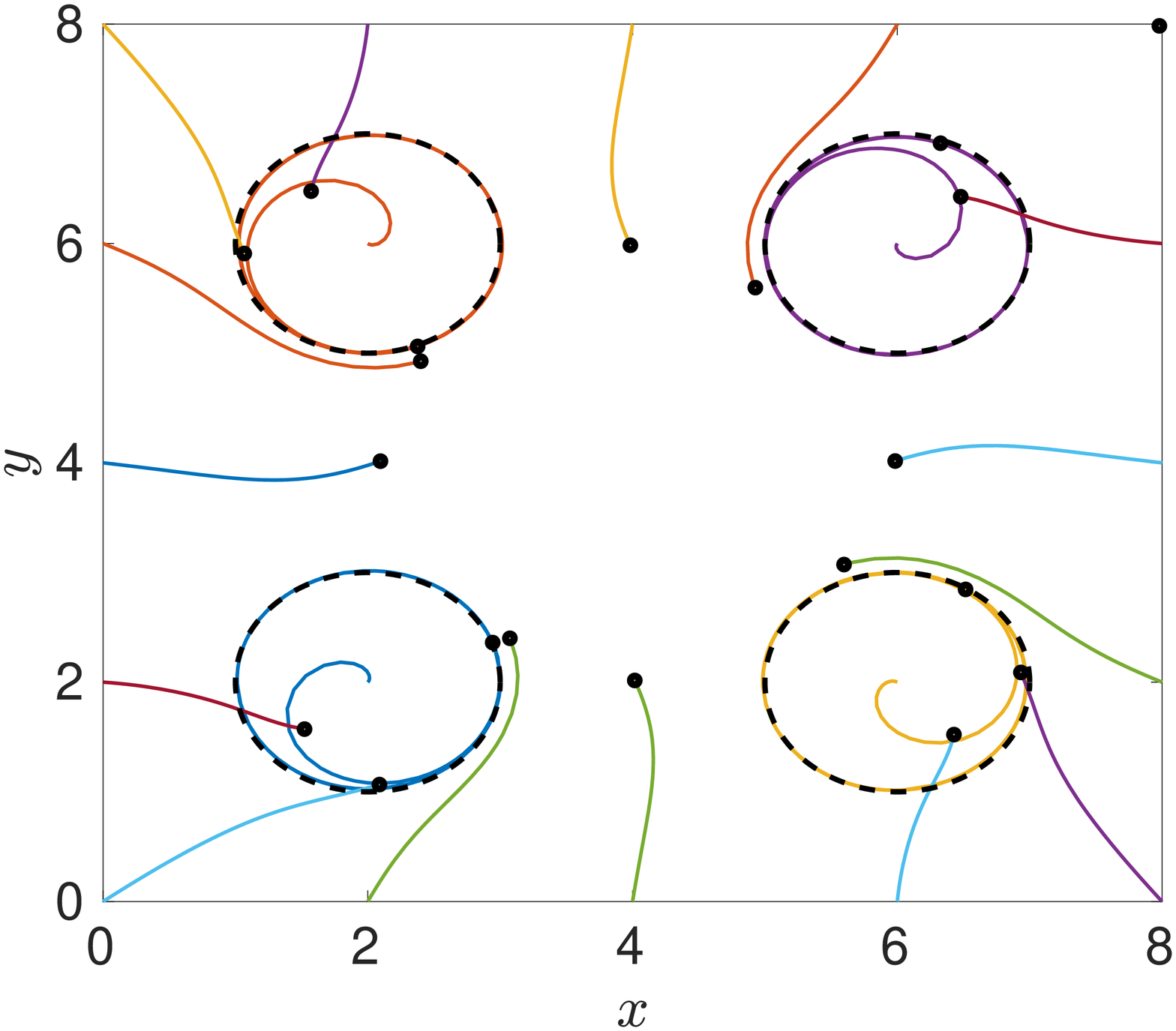}\;
\includegraphics[width=0.495\textwidth]{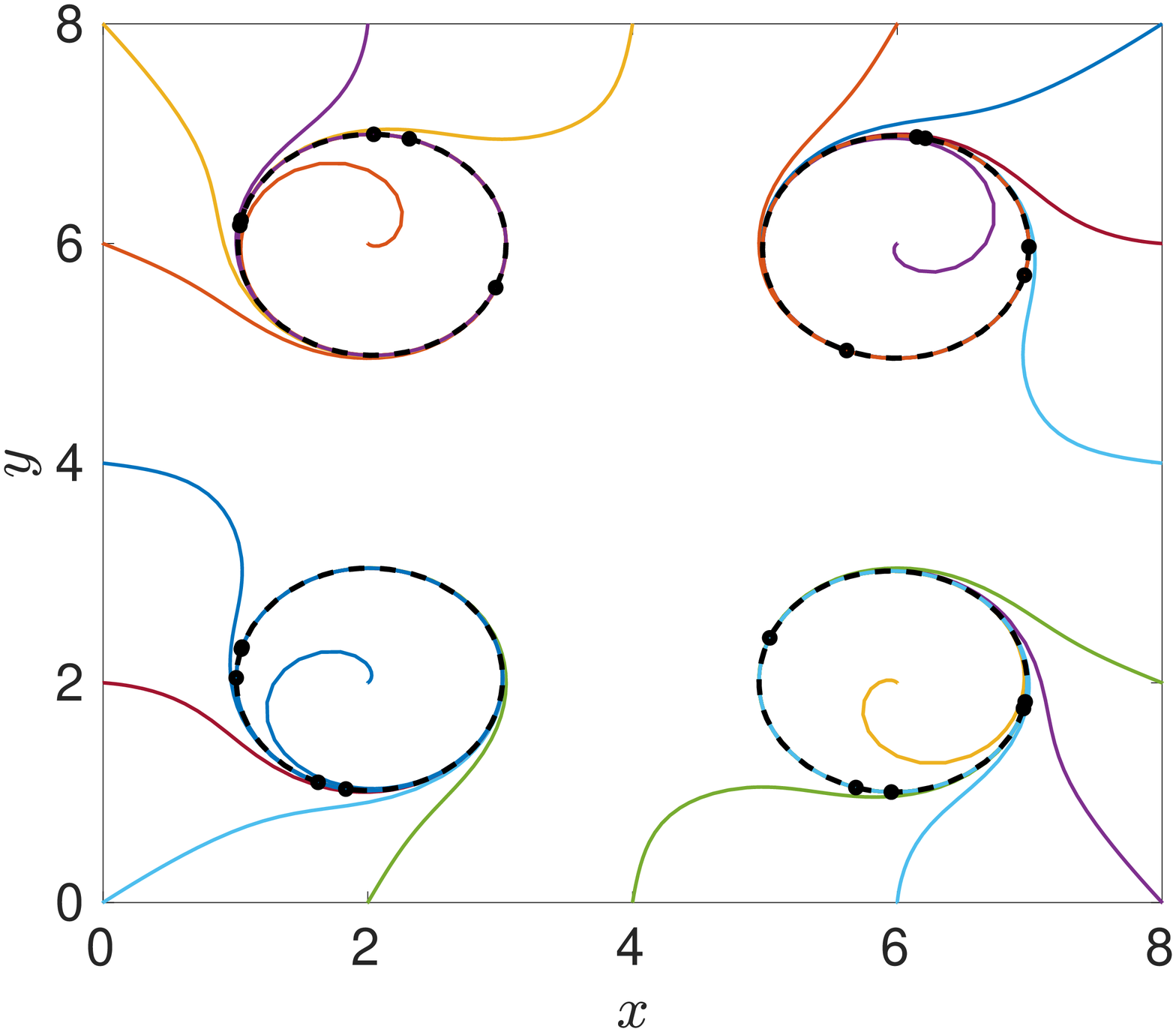}}
\caption{(a) {\it Twenty illustrative trajectories of the ODE system~$(\ref{xode3})$--$(\ref{uiode3})$ for $K=4$, the parameter choices $a_1 = b_1 = a_2 = b_3 = 2$, $a_3 = b_2 = a_4 = b_4 = 6$ and the initial condition~$(\ref{initcond2})$ with $c=1/2$. The black dots denote the final position of each calculated trajectory at time $t=100$. The black dashed lines are limit cycles shown in Figure~$\ref{fig1}(a)$.} \hfill\break (b) {\it Twenty illustrative trajectories of the ODE system~$(\ref{xode4})$--$(\ref{viode4})$ for $K=4$, the parameter choices $\varepsilon=1$, $a_1 = b_1 = a_2 = b_3 = 2$, $a_3 = b_2 = a_4 = b_4 = 6$ and the initial condition~$(\ref{initcond2})$ with $c=1/2$. As $t \to \infty$, all trajectories approach one of the four limit cycles, which are plotted as the black dashed lines. The black dots denote the final position of each calculated trajectory at time $t=100$.}
}\label{fig2}
\end{figure}

On the other hand, considering the extended ODE system~(\ref{xode3})--(\ref{uiode3}) with the initial condition (\ref{initcond2}) for $c>1$, some additional variables $u_i(t)$ tend to infinity as $t \to \infty$, and we again do not observe sustained oscillations in our numerical experiments (results not shown). In particular, the formal conversion of the non-polynomial ODE system~(\ref{xode2})--(\ref{yode2}) into the polynomial system~(\ref{xode3})--(\ref{uiode3}) does not preserve the dynamics well.
Therefore, we augment our polynomial ODE system~(\ref{xode2})--(\ref{yode2}) in a different way. We introduce $K$ new variables $v_i$,
$i=1,2,\dots,K$, and formulate the extended ODE system as the following $(K+2)$ equations:
\begin{eqnarray}
\frac{\mbox{d}x}{\mbox{d}t} 
\!&=&\! 
\sum_{k=1}^K v_k \left[(x-a_k)
\big\{1-(x-a_k)^2-(y-b_k)^2\big\}
-(y-b_k)\right], \quad\; 
\label{xode4}\\
\frac{\mbox{d}y}{\mbox{d}t} 
\!&=&\! 
\sum_{k=1}^K v_k \left[(y-b_k)
\big\{1-(x-a_k)^2-(y-b_k)^2\big\}
+(x-a_k)\right], \quad\;
\label{yode4} \\
\varepsilon \, \frac{\mbox{d}v_i}{\mbox{d}t}
\!&=&\! 
1 - v_i \left[ 1 + (x - a_i)^6 + (y - b_i)^6 \right],
\qquad
\mbox{for}
\quad
i=1,2,\dots,K,
\label{viode4}
\end{eqnarray}
where $\varepsilon > 0$ is a constant.
The first two ODEs~(\ref{xode4})--(\ref{yode4}) are the same as ODEs~(\ref{xode3})--(\ref{yode3}) with $v_k$ taking place of~$u_k$. The difference is in the dynamics of the additional variables, i.e. in equation~(\ref{viode4}) which removes the non-polynomial factor~(\ref{nonpolfactor}) in a different way. Rather than defining new variable $u_i$ in the form~(\ref{nonpolfactor}) and deriving ODEs which have equivalent dynamics to the ODE system~(\ref{xode2})--(\ref{yode2}) for a very special initial condition~(\ref{initcond1}), we
have written the ODE~(\ref{viode4}) in such a way that it formally recovers the non-polynomial factor~(\ref{nonpolfactor}) in the limit $\varepsilon \to 0$, which will be used in our proof of Lemma~\ref{lem4}, where we consider small values of $\varepsilon$. 
However, even for larger values of $\varepsilon$, the ODE system~(\ref{xode4})--(\ref{viode4}) has multiple limit cycles for general initial conditions, as it is illustrated for $\varepsilon=1$ and $K=4$ in Figure~\ref{fig2}(b), where all plotted trajectories finish on a limit cycle (see the final calculated positions, at time $t=100$, plotted as black dots). 

Next, we prove that the extended system~(\ref{xode4})--(\ref{viode4}) has $K$ limit cycles 
in the sense of Definition~\ref{deflc} 
for general values of~$K$. Since~(\ref{xode4})--(\ref{viode4}) is a system of $(K+2)$ ODEs, we cannot directly apply the Poincar\'e-Bendixson theorem as we did for the planar system in the proof of Lemma~\ref{lem3}. While one possible approach to proving the existence of limit cycles is to work with generalizations of the Poincar\'e-Bendixson theorem to higher dimensional ODEs~\cite{Hirsch:1982:SDE,Li:1996:PAS,Sanchez:2010:EPO}, we will base our proof of Lemma~\ref{lem4} on the application of Tikhonov's theorem~\cite{Tikhonov:1952:SDE,Klonowski:1983:SPC} and the result of Lemma~\ref{lem3}. In particular, we show that the extended system~(\ref{xode4})--(\ref{viode4}) is a polynomial system which has $K$ limit cycles for sufficiently small values of $\varepsilon.$

\begin{lemma}
\label{lem4}
Let us assume that parameters $a_i>0$ and $b_i>0$, $i=1,2,\dots,K,$ satisfy the inequality~$(\ref{sepass})$.
Then there exists $\varepsilon_0>0$ such that the ODE system~$(\ref{xode4})$--$(\ref{viode4})$ has at least $K$ stable limit cycles for all $\varepsilon \in (0,\varepsilon_0).$
\end{lemma}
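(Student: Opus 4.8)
The plan is to read the augmented system~(\ref{xode4})--(\ref{viode4}) as a singularly perturbed slow--fast system with slow variables $(x,y)$ and fast variables $v=(v_1,\dots,v_K)$, and to combine Tikhonov's theorem (more precisely, the geometric singular perturbation theory of Fenichel) with a trapping--region argument built from the estimates already established in the proof of Lemma~\ref{lem3}.

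\textbf{Step 1: the critical manifold and its attractivity.} Setting $\varepsilon=0$ in~(\ref{viode4}) gives the critical manifold $M_0=\{(x,y,v):v_i=\phi_i(x,y),\ i=1,\dots,K\}$ with $\phi_i(x,y)=1/(1+(x-a_i)^6+(y-b_i)^6)$, which is precisely the non-polynomial factor~(\ref{nonpolfactor}); $M_0$ is a graph over the $(x,y)$--plane, hence diffeomorphic to $\mathbb{R}^2$, and the reduced dynamics on $M_0$ is exactly the planar system~(\ref{xode2})--(\ref{yode2}). The first observation is that the layer (fast) problem $\dot v_i=1-v_i\lambda_i(x,y)$, with $\lambda_i(x,y)=1+(x-a_i)^6+(y-b_i)^6\ge 1$ and $(x,y)$ frozen, is linear in $v_i$ with coefficient $-\lambda_i\le-1$, so the layer equilibrium $v_i=\phi_i(x,y)$ is globally exponentially stable, uniformly in $(x,y)$. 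Thus $M_0$ is normally hyperbolic and attracting; Tikhonov's theorem then applies, so on any finite interval $[0,T]$ and for bounded initial data the solution of~(\ref{xode4})--(\ref{viode4}) converges as $\varepsilon\to0$ (uniformly on $[\delta,T]$ in the $v$--components) to the solution of~(\ref{xode2})--(\ref{yode2}) together with $v_i=\phi_i(x,y)$.

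\textbf{Step 2: a forward--invariant tube around each annulus.} Fix $i$, let $\Omega_i$ be the annulus~(\ref{omegaidef}), and for small $\delta>0$ set
\begin{equation*}
\mathcal{T}_i(\delta)=\Big\{(x,y,v):(x,y)\in\overline{\Omega_i},\ |v_j-\phi_j(x,y)|\le\delta\ \text{for all }j=1,\dots,K\Big\}.
\end{equation*}
On the faces where $|v_j-\phi_j|=\delta$, one computes $\tfrac{\mathrm d}{\mathrm dt}(v_j-\phi_j)=-\tfrac1\varepsilon\lambda_j(v_j-\phi_j)-\nabla\phi_j\cdot(\dot x,\dot y)$; the first term has magnitude $\ge\delta/\varepsilon$ and points toward $v_j=\phi_j$, while the second is bounded uniformly on $\mathcal{T}_i(\delta)$, so for $\varepsilon$ small the flow enters through these faces. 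On the faces $(x,y)\in\partial\Omega_i$, the right-hand sides of~(\ref{xode4})--(\ref{yode4}) differ from $f,g$ of~(\ref{xode2})--(\ref{yode2}) only by $\sum_k(v_k-\phi_k)[\cdots]=O(\delta)$; since $\partial\Omega_{i1}$ and $\partial\Omega_{i2}$ are compact and the scalar-product computations of Lemma~\ref{lem3} are \emph{strict} there (hence bounded away from $0$ by compactness), for $\delta$ small the $(x,y)$--flow still points into the annulus. Hence $\mathcal{T}_i(\delta)$ is forward invariant for all sufficiently small $\varepsilon$, and the tubes $\mathcal{T}_1(\delta),\dots,\mathcal{T}_K(\delta)$ are pairwise disjoint because the $\overline{\Omega_i}$ are, by~(\ref{sepass}).

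\textbf{Step 3: a stable limit cycle in each tube, and the main obstacle.} By Fenichel's theorem, for $\varepsilon$ small there is a locally invariant, normally attracting slow manifold $M_\varepsilon\cap\mathcal{T}_i(\delta)=\{v=\phi^\varepsilon(x,y)\}$, a graph over $\overline{\Omega_i}$ with $\phi^\varepsilon=\phi+O(\varepsilon)$ in $C^1$, carrying a planar flow that is a $C^1$, $O(\varepsilon)$--perturbation of~(\ref{xode2})--(\ref{yode2}), and equipped with a stable invariant foliation with asymptotic phase. On the two-dimensional $M_\varepsilon\cap\mathcal{T}_i(\delta)$ the induced flow still has $\overline{\Omega_i}$ as a positively invariant annulus free of equilibria (both properties persisting under the $O(\varepsilon)$ perturbation since the Lemma~\ref{lem3} estimates are strict), so repeating the Poincar\'e--Bendixson argument of Lemma~\ref{lem3} inside $M_\varepsilon\cap\mathcal{T}_i(\delta)$ produces a stable limit cycle $\gamma_i^\varepsilon$ of the restricted flow; the normal attractivity of $M_\varepsilon$ with asymptotic phase then upgrades $\gamma_i^\varepsilon$ to a stable limit cycle of the full $(K+2)$--dimensional system in the sense of Definition~\ref{deflc}. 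As the $\mathcal{T}_i(\delta)$ are pairwise disjoint, $\gamma_1^\varepsilon,\dots,\gamma_K^\varepsilon$ are distinct, and taking $\varepsilon_0$ to be the minimum of the finitely many thresholds above finishes the argument. The delicate point is the passage from the \emph{finite-time} conclusion of Tikhonov's theorem to the \emph{infinite-time} existence of a limit cycle: this is exactly why Steps~2 and~3 are needed — one must confine the dynamics to a forward-invariant tube and then run a genuinely two-dimensional Poincar\'e--Bendixson argument on the persistent slow manifold. A secondary subtlety is that Lemma~\ref{lem3} yields only a \emph{stable} (not necessarily hyperbolic) limit cycle, so the transfer to the full system has to go through the invariant-manifold/asymptotic-phase structure rather than through linearisation of a Poincar\'e return map.
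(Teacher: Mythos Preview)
Your argument is correct and follows the same singular-perturbation strategy as the paper: identify the reduced slow system as the planar ODE~(\ref{xode2})--(\ref{yode2}), verify global attractivity of the fast (adjoined) equilibrium, and appeal to Tikhonov/Fenichel to transfer the $K$ planar limit cycles to the full $(K+2)$-dimensional system. The paper's proof is much terser: it defines tubular neighbourhoods $\Omega_i\subset\mathbb{R}^{K+2}$ around the lifted reduced limit cycles, checks the adjoined system has a globally attracting equilibrium, and then simply invokes Tikhonov's theorem to conclude that each $\Omega_i$ contains a stable limit cycle of the full system. You make explicit precisely the step the paper leaves implicit --- that the classical Tikhonov theorem is a finite-time approximation result, so obtaining a genuine periodic orbit requires either a persistence-of-attractors variant or the Fenichel route you take (forward-invariant tube, normally hyperbolic slow manifold $M_\varepsilon$, Poincar\'e--Bendixson on the perturbed two-dimensional flow, asymptotic phase to upgrade stability). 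A further difference is that your tubes are built over the annuli $\overline{\Omega_i}$ of Lemma~\ref{lem3} rather than over the (a priori unknown) reduced limit cycles themselves, which lets you reuse the strict scalar-product bounds from that proof directly. What the paper's brevity buys is economy; what your version buys is a self-contained justification of the infinite-time conclusion and an explicit acknowledgement of the secondary subtlety (possible non-hyperbolicity of the reduced limit cycle) that the paper does not discuss.
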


\begin{proof}
Let us consider $\varepsilon=0$. Then the right-hand side of the ODE~(\ref{viode4}) is equal to zero. This equation can be solved for $v_i$, $i=1,2,\dots,K,$ to obtain 
$v_i = q_i(x,y),$ where we define
\begin{equation}
q_i(x,y) 
= 
\frac{1}{1 + (x - a_i)^6 + (y - b_i)^6}.
\label{defqi}
\end{equation}
 Substituting $v_i=q_i(x,y)$ into (\ref{xode4})--(\ref{yode4}), we obtain that the reduced problem in the sense of Tikhonov's theorem~\cite{Tikhonov:1952:SDE,Klonowski:1983:SPC} is equal to
\begin{eqnarray}
\frac{\mbox{d}\overline{x}}{\mbox{d}t} 
&=&
f(\overline{x},\overline{y})\,, \quad\; 
\label{xode2red}\\
\frac{\mbox{d}\overline{y}}{\mbox{d}t} 
&=& 
g(\overline{x},\overline{y})\,, \quad\;
\label{yode2red}
\end{eqnarray}
where functions $f(\cdot,\cdot)$ and $g(\cdot,\cdot)$ are defined in (\ref{xode2}) and (\ref{yode2}). This means that the reduced system~(\ref{xode2red})--(\ref{yode2red}) corresponding to the fast--slow extended ODE system~(\ref{xode4})--(\ref{viode4}) is the same as our original non-polynomial ODE system (\ref{xode2})--(\ref{yode2}). Therefore, using Lemma~\ref{lem3}, we know that the reduced system~(\ref{xode2red})--(\ref{yode2red}) has (at least) $K$ stable limit cycles in the sense of Definition~\ref{deflc}, i.e. there exist $K$ solutions
\begin{equation} 
(\overline{x}_{c,i}(t),
\overline{y}_{c,i}(t))
\qquad
\mbox{for} 
\quad
t \in [0,\infty),
\quad
i = 1,2,\dots,K,
\label{limitcycles}
\end{equation}
of the reduced system~(\ref{xode2red})--(\ref{yode2red})
which are periodic with period $T_i>0$ for $i=1,2,\dots,K$. Moreover, there exist $\varepsilon_i>0$, $i=1,2,\dots,K$, such that any solution 
$(\overline{x}(t),\overline{y}(t))$ of the reduced system~(\ref{xode2red})--(\ref{yode2red}) approaches the limit cycle $(\overline{x}_{c,i}(t),
\overline{y}_{c,i}(t))$ as $t \to \infty$ provided that the initial condition $(\overline{x}(0),\overline{y}(0))$ satisfies
\begin{equation}
\min_{t \in [0,T_i]}
\big(\overline{x}(0)-\overline{x}_{c,i}(t)\big)^2
+
\big(\overline{y}(0)-\overline{y}_{c,i}(t)\big)^2
<\varepsilon_i.
\label{epsidef}
   \end{equation}
Next, we define pairwise disjoint sets $\Omega_i \subset \mathbb{R}^{K+2}$ for $i=1,2,\dots,K$  by
\begin{eqnarray}
\Omega_i 
& = &
\bigg\{
(x,y,v_1,v_2,\dots,v_K) \in \mathbb{R}^{K+2}
\quad \mbox{such that}
\label{defomegailem4}    
\\
&&
\min_{t \in [0,T_i]}
\big(x-\overline{x}_{c,i}(t)\big)^2
+
\big(y-\overline{y}_{c,i}(t)\big)^2
+
\sum_{j=1}^K
\big(v_j
-
q_j(\overline{x}_{c,i}(t),\overline{y}_{c,i}(t))\big)^2
<\varepsilon_i
\bigg\} \, ,
\nonumber
\end{eqnarray}
where functions $q_j(\cdot,\cdot)$ are defined by~(\ref{defqi}). Let us define
$$
\varepsilon_0
=
\min_{i \in \{1, 2, \dots, K \} }
\varepsilon_i.
$$
Let $\varepsilon \in (0,\varepsilon_0)$ be chosen arbitrarily. To show that the extended fast-slow polynomial ODE system~(\ref{xode4})--(\ref{viode4}) has (at least) $K$ stable limit cycles, it is sufficient to show that each set $\Omega_i$ contains one stable limit cycle. We will do this by applying Tikhonov's theorem~\cite{Tikhonov:1952:SDE,Klonowski:1983:SPC}. Considering the ODEs~(\ref{viode4}) for $i=1,2,\dots,K,$ where $x>0$ and $y>0$ are taken as parameters, we obtain the adjoined system as a $K$-dimensional system of ODEs with an isolated stable equilibrium point $[q_1(x,y),q_2(x,y),\dots,q_K(x,y)]$, where $q_i(\cdot,\cdot)$ is defined in (\ref{defqi}). This equilibrium point attracts the solutions of adjoined system for any initial condition. Therefore, the ODE system (\ref{xode4})--(\ref{viode4}) has a limit cycle in~$\Omega_i$. Moreover, this limit cycle attracts any solution  $\big(x(t),y(t),v_1(t),v_2(t),\dots,v_K(t)\big)$ of the system~(\ref{xode4})--(\ref{viode4}) with initial condition satisfying
$
\big(x(0),y(0),v_1(0),v_2(0),\dots,v_K(0)\big) \in \Omega_i.
$
\end{proof}

\section{Chemical systems with arbitrary many limit cycles}
\label{sec5}

To construct a CRN with $K$ limit cycles, we first construct a system of ODEs with polynomial right-hand sides which satisfy the condition~(\ref{nocrosstermcond}) in Lemma~\ref{lem1}, i.e. it will be a system of reaction rate equations, which correspond to a CRN. Once we have such reaction rate equations, there are infinitely many CRNs described by them, so we conclude this section by specifying some illustrative CRNs corresponding to the derived  reaction rate equations. 

Our starting point is the polynomial ODE system~(\ref{xode4})--(\ref{viode4}), which has $K$ limit cycles provided that the conditions of Lemma~\ref{lem4} are satisfied. The reaction rate equations are constructed by applying the so called $x$-factorable transformation~\cite{Plesa:2016:CRS} to the right-hand sides of equations (\ref{xode4}) and (\ref{yode4}). We do not modify the right-hand sides of ODEs~(\ref{viode4}), because they already satisfy the conditions of Definition~\ref{defrre}. We obtain the ODE system:
\begin{eqnarray}
\frac{\mbox{d}x}{\mbox{d}t} 
\!&=&\! 
\sum_{k=1}^K x \, v_k \left[(x-a_k)
\big\{1-(x-a_k)^2-(y-b_k)^2\big\}
-(y-b_k)\right], \quad\; 
\label{xode5}\\
\frac{\mbox{d}y}{\mbox{d}t} 
\!&=&\! 
\sum_{k=1}^K y \, v_k \left[(y-b_k)
\big\{1-(x-a_k)^2-(y-b_k)^2\big\}
+(x-a_k)\right], \quad\;
\label{yode5} \\
\varepsilon \, \frac{\mbox{d}v_i}{\mbox{d}t}
\!&=&\! 
1 - v_i \left[ 1 + (x - a_i)^6 + (y - b_i)^6 \right],
\qquad
\mbox{for}
\quad
i=1,2,\dots,K.
\label{viode5}
\end{eqnarray}
The illustrative dynamics of the ODE system~(\ref{xode5})--(\ref{viode5}) is presented in Figure~\ref{fig3}(a), where we use the same parameters as we use in Figure~\ref{fig2}(b) for the ODE system~(\ref{xode4})--(\ref{viode4}). We observe that the presented trajectories converge to one of the four limit cycles as in Figure~\ref{fig2}(b). The shape of the limit cycles is slightly modified by using the $x$-factorable transformation, but the limit cycles are still there as we formally prove in Section~\ref{sec6}. 

The $x$-factorable transformations modify the dynamics on the $x$-axis and $y$-axis. In Figure~\ref{fig3}(a), we present illustrative trajectories which all start with the positive values of $x(0)$ and $y(0)$, while in Figure~\ref{fig2}(b), some of the illustrative trajectories have zero initial values of $x(0)$ and $y(0)$. To get a comparable plot, we use the same initial conditions in both Figure~\ref{fig2}(b) and Figure~\ref{fig3}(a), with the only exception that all initial conditions with $x(0)=0$ (resp. $y(0)=0$) in Figure~\ref{fig2}(b) are replaced by $x(0)=1/2$
(resp. $y(0)=1/2$) in Figure~\ref{fig3}(a). We note that if we start a trajectory of
the ODE system~(\ref{xode5})--(\ref{viode5}) on the $x$-axis or the $y$-axis, then it stays on the axis.

\begin{figure}[t]%
(a) \hskip 5.86cm (b) \hfill\break
\centerline{
\includegraphics[width=0.495\textwidth]{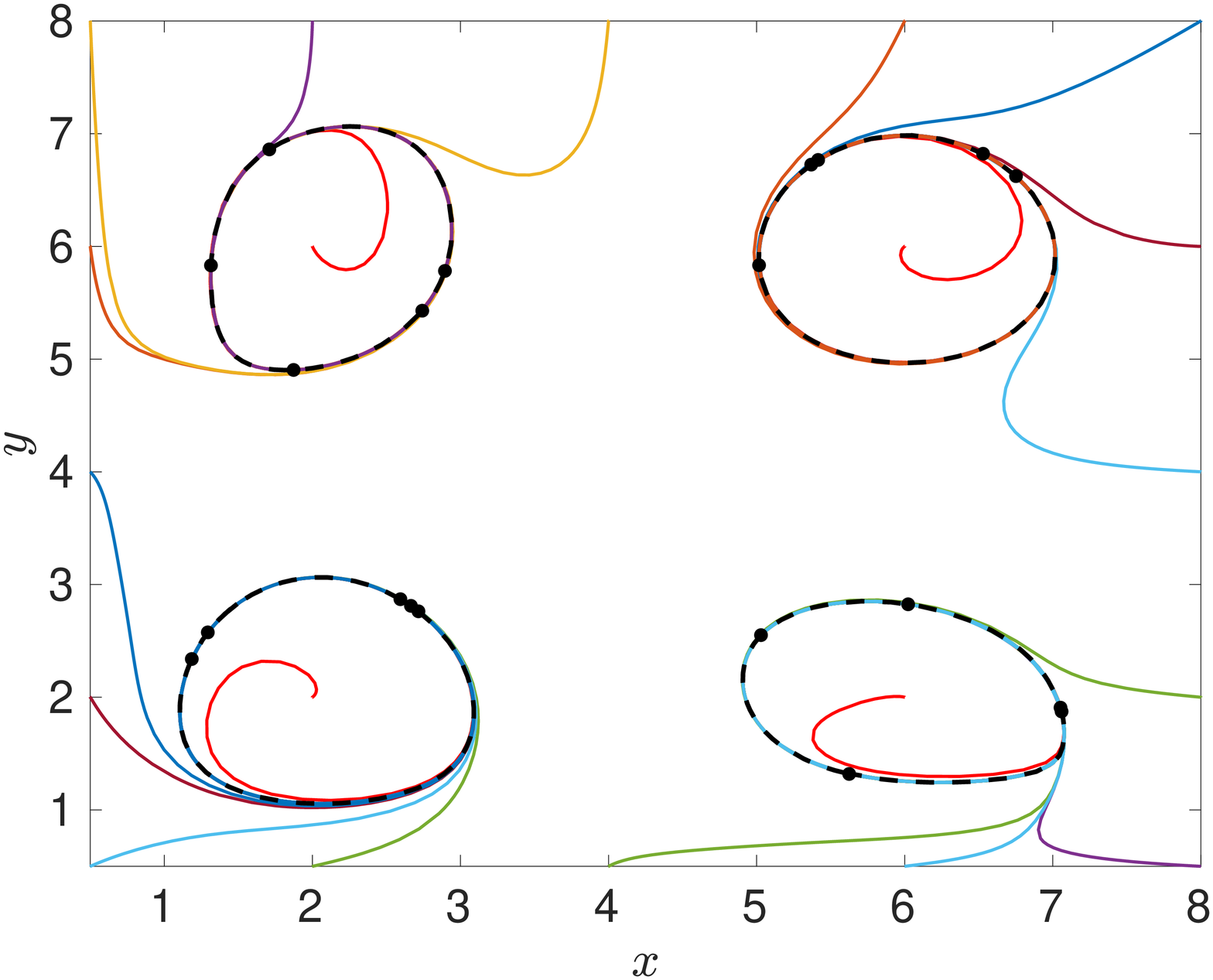}\;
\includegraphics[width=0.495\textwidth]{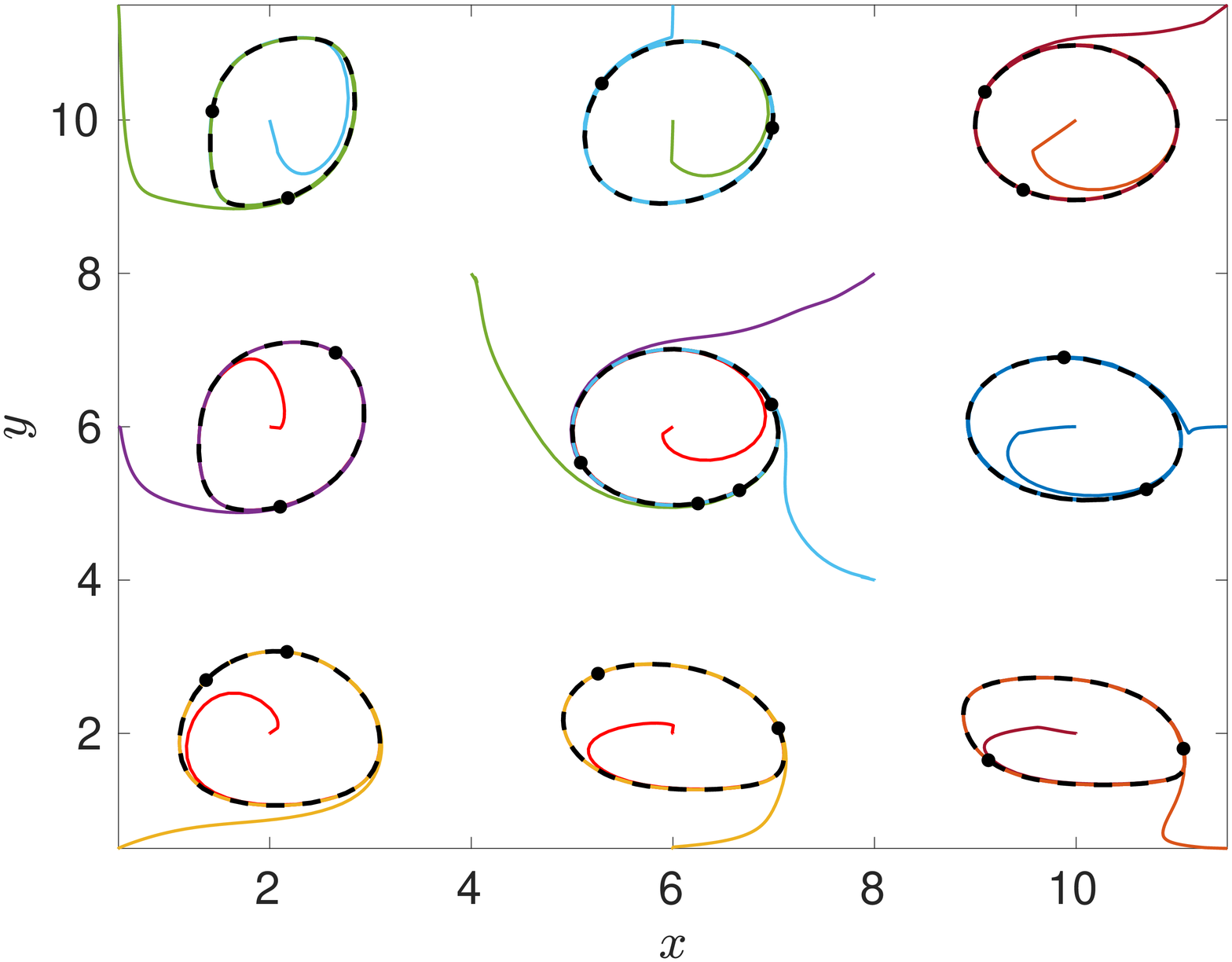}}
\caption{(a) {\it Twenty illustrative trajectories of the ODE system~$(\ref{xode5})$--$(\ref{viode5})$ for $K=4$, the parameter choices $a_1 = b_1 = a_2 = b_3 = 2$, $a_3 = b_2 = a_4 = b_4 = 6$, $\varepsilon=1$ and the initial condition~$(\ref{initcond2})$ with $c=1/2$. As $t \to \infty$, all trajectories approach one of the four limit cycles, which are plotted as the black dashed lines. As in Figure~$\ref{fig2}$, the black dots denote the final position of each calculated trajectory at time $t=100$.} \hfill\break (b) {\it Twenty illustrative trajectories of the ODE system~$(\ref{xode5})$--$(\ref{viode5})$ for $K=9$, the parameter choices $a_1 = b_1 = a_2 = b_3 = a_6 = b_7 = 2$, $a_3 = b_2 = a_4 = b_4 = a_5 = b_8 =6$, $a_7 = a_8 = a_9 = b_5 = b_6 =  b_9 = 10$, $\varepsilon=1$ and the initial condition~$(\ref{initcond3})$. As $t \to \infty$, all trajectories approach one of the nine limit cycles, which are plotted as the black dashed lines. The black dots denote the final position of each calculated trajectory at time $t=100$.}
}\label{fig3}
\end{figure}

In Figure~\ref{fig3}(b), we present illustrative dynamics of the ODE system~(\ref{xode5})--(\ref{viode5}) for $K=9$, showing that each computed trajectory converges to one of the 9 limit cycles denoted by black dashed lines. To illustrate that this behaviour does not require special choices of initial conditions, we used different initial conditions for $x(0)$ and $y(0)$ together with the initial conditions for variables $v_i$ satisfying 
\begin{equation}
v_i(0) = 1,
\qquad
\mbox{for} \; i=1,2,\dots,K.
\label{initcond3}
\end{equation}
However, a similar figure can be obtained if we replace (\ref{initcond3}) with the initial condition (\ref{initcond2}), or if we initialize all values of $v_i$, $i=1,2,\dots,K$ as zero (results not shown).

A CRN corresponding to reaction rate equations~(\ref{xode5})--(\ref{viode5}) can be obtained (by applying the construction in the proof of Lemma~\ref{lem1}) as the CRN with $K+2$ chemical species, i.e. using the notation in Definition~\ref{defcrn}, we have
\begin{equation}
\mathcal{S} = \left\{
X, Y, V_1, V_2, \dots, V_K
\right\}.
\label{setSchemsp}
\end{equation}
To specify the reaction complexes and chemical reactions, we expand the right-hand side of reaction rate equations~(\ref{xode5})--(\ref{viode5}). First, we rewrite ODEs~(\ref{viode5}) as
\begin{eqnarray}
\frac{\mbox{d}v_i}{\mbox{d}t}
\!&=&\!
- 
k_{i,1} \, v_i
+ 
k_{i,2} \, v_i x
+
k_{i,3} \, v_i y
-
k_{i,4} \, v_i x^2
-
k_{i,5} \, v_i y^2
+
k_{i,6} \, v_i x^3
+
k_{i,7} \, v_i y^3
\nonumber
\\
\!&-&\!
k_{i,8} \, v_i x^4
-
k_{i,9} \, v_i y^4
+
k_{i,10} \, v_i x^5
+
k_{i,11} \, v_i y^5
-
v_i x^6 / \varepsilon
-
v_i y^6 / \varepsilon
+
1 / \varepsilon,
\qquad
\label{expandedVi}
\end{eqnarray}
where $k_{i,j}$, $i=1,2,\dots,K,$ $j=1,2,\dots,11,$ are positive constants given by
\begin{equation*}
k_{i,1} = (1 + a_i^6 + b_i^6) / \varepsilon,
\quad
k_{i,2} = 6 a_i^5 / \varepsilon,
\quad
k_{i,3} = 6 b_i^5 / \varepsilon,
\quad
k_{i,4} = 15 a_i^4 / \varepsilon,
\end{equation*}
\begin{equation}
k_{i,5} = 15 b_i^4 / \varepsilon,
\quad
k_{i,6} = 20 a_i^3 / \varepsilon,
\quad
k_{i,7} = 20 b_i^3 / \varepsilon,
\quad
k_{i,8} = 15 a_i^2 / \varepsilon,
\;
\label{k1list1}
\end{equation}
\begin{equation*}
k_{i,9} = 15 b_i^2 / \varepsilon,
\quad
k_{i,10} = 6 a_i / \varepsilon
\quad
\mbox{and}
\quad
k_{i,11} = 6 b_i / \varepsilon.
\end{equation*}
Consequently, the right-hand side of equation~(\ref{viode5}) can be interpreted as the set of 14 chemical reactions for each $i=1,2,\dots,K$. We define it as 
\begin{eqnarray}
\mathcal{R}_i
\!&=&\!
\left\{
V_i \mathop{\longrightarrow}^{k_{i,1}} \emptyset, 
\quad
V_i+X \mathop{\longrightarrow}^{k_{i,2}} 2V_i+X,
\quad
V_i+Y \mathop{\longrightarrow}^{k_{i,3}} 2V_i+Y,
\quad
V_i+2X \mathop{\longrightarrow}^{k_{i,4}} 2X,
\right. 
\nonumber
\\
&&
\;\;
V_i+2Y \mathop{\longrightarrow}^{k_{i,5}} 2Y,
\qquad
V_i+3X \mathop{\longrightarrow}^{k_{i,6}} 2V_i+3X,
\qquad
V_i+3Y \mathop{\longrightarrow}^{k_{i,7}} 2V_i+3Y,
\qquad\;\;
\label{setri}
\\
&&
\;\;
V_i+4X \mathop{\longrightarrow}^{k_{i,8}} 4X,
\qquad
V_i+4Y \mathop{\longrightarrow}^{k_{i,9}} 4Y,
\qquad
V_i+5X \mathop{\longrightarrow}^{k_{i,10}} 2V_i+5X,
\nonumber
\\
&&
\left.
\;\;
V_i+5Y \mathop{\longrightarrow}^{k_{i,11}} 2V_i+5Y,
\quad
V_i+6X \mathop{\longrightarrow}^{1 / \varepsilon} 6X,
\quad
V_i+6Y \mathop{\longrightarrow}^{1 / \varepsilon} 6Y,
\quad
\emptyset \mathop{\longrightarrow}^{1 / \varepsilon} V_i
\right\}.
\nonumber
\end{eqnarray}
Consequently, reaction rate equations~(\ref{viode5}) correspond to $14\, K$ chemical reactions in sets $\mathcal{R}_i$, $i=1,2,\dots,K.$ Similarly, we rewrite
ODEs~(\ref{xode5})--(\ref{yode5}) as
\begin{eqnarray}
\frac{\mbox{d}x}{\mbox{d}t} 
\!&=&\! 
\sum_{i=1}^K  
\bigg[- v_i x^4
+ k_{i,12} \, v_i x^3
- k_{i,13} \, v_i x^2
+ k_{i,14} \, v_i x
+ a_i \, v_i x y^2
\nonumber
\\
&& \qquad
+ k_{i,15} \, v_i x^2 y
- k_{i,16} \, v_i x y
- \, v_i x^2 y^2\bigg], \quad\; 
\label{expandedx}
\\
\frac{\mbox{d}y}{\mbox{d}t} 
\!&=&\! 
\sum_{k=1}^K 
\bigg[- v_i y^4
+ k_{i,17} \, v_i y^3
- k_{i,18} \, v_i y^2
+ k_{i,19} \, v_i y
+ b_i \, v_i x^2 y
\nonumber
\\
&& \qquad
+ k_{i,20} \, v_i x y^2
- k_{i,21} \, v_i x y
- \, v_i x^2 y^2 \bigg], \quad\;
\label{expandedy}
\end{eqnarray}
where $k_{i,j}$, $i=1,2,\dots,K,$ $j=12,13,\dots,21,$ are constants given by
\begin{equation*}
k_{i,12} = 3 a_i,
\quad
k_{i,13} = 3 a_i^2 + b_i^2 - 1,
\quad
k_{i,14} = a_i^3 + a_i b_i^2 + b_i - a_i,
\quad
k_{i,15} = 2 b_i,
\end{equation*}
\begin{equation}
k_{i,16} = 1 + 2 a_i b_i,
\quad
k_{i,17} = 3 b_i,
\quad
k_{i,18} = a_i^2 + 3 b_i^2 - 1,
\label{kilist2}
\end{equation}
\begin{equation*}
k_{i,19} = b_i^3 + a_i^2 b_i - a_i - b_i,
\quad
k_{i,20} = 2 a_i,
\quad
k_{i,21} = 2 a_i b_i - 1.
\end{equation*}
Considering sufficiently large $a_i$ and $b_i$ (say, for $a_i>1$ and $b_i>1$), the constants~(\ref{kilist2}) are positive. Moreover,
since the term $- v_i x^2 y^2$ appears in both equations~(\ref{expandedx}) and~(\ref{expandedy}), the right-hand sides of equations~(\ref{xode5})--(\ref{yode5}) can be interpreted as the set of $15 \, K$ chemical reactions. We define 
\begin{eqnarray}
\mathcal{R}_i^*
\!&=&\!
\left\{
V_i + 4X \mathop{\longrightarrow}^{1} V_i + 3X, 
\quad
V_i + 3X \mathop{\longrightarrow}^{k_{i,12}} V_i + 4X,
\quad
V_i + 2X \mathop{\longrightarrow}^{k_{i,13}} V_i + X,
\right. 
\nonumber
\\
&&
\;\;
V_i + X \mathop{\longrightarrow}^{k_{i,14}} V_i + 2X,
\qquad
V_i + X + 2Y \mathop{\longrightarrow}^{a_i} V_i + 2X + 2Y,
\nonumber
\\
&&
\;\;
V_i + 2X + Y \mathop{\longrightarrow}^{k_{i,15}} V_i + 3X +Y,
\qquad
V_i + X + Y \mathop{\longrightarrow}^{k_{i,16}} V_i + Y,
\nonumber
\\
&&
\;\;
V_i + 2X + 2Y \mathop{\longrightarrow}^{1} V_i + X + Y,
\qquad
V_i + 4Y \mathop{\longrightarrow}^{1} V_i + 3Y,
\nonumber
\\
&&
\;\;
V_i + 3Y \mathop{\longrightarrow}^{k_{i,17}} V_i + 4Y,
\quad
V_i + 2Y \mathop{\longrightarrow}^{k_{i,18}} V_i + Y,
\quad
V_i + Y \mathop{\longrightarrow}^{k_{i,19}} V_i + 2Y,
\nonumber
\\
&&
\;\;
V_i + 2X + Y \mathop{\longrightarrow}^{b_i} V_i + 2X + 2Y,
\qquad
V_i + X + 2Y \mathop{\longrightarrow}^{k_{i,20}} V_i + X + 3Y,
\nonumber
\\
&&
\left.
\;\;
V_i + X + Y \mathop{\longrightarrow}^{k_{i,21}} V_i + X
\right\},
\qquad
\mbox{for} \quad i=1,2,\dots,K.
\label{setristar}
\end{eqnarray}
Then, we conclude that the reaction rate equations~(\ref{xode5})--(\ref{viode5}) correspond to the CRN with $N=K+2$ chemical species and $29 \, K$ chemical reactions of at most seventh order given by
\begin{equation}
\mathcal{R}
=
\bigcup_{i=1}^K \mathcal{R}_i \cup \mathcal{R}_i^*.
\label{deduced_reactions} 
\end{equation}
The CRN $(\mathcal{S},\mathcal{C},\mathcal{R})$ consisting of chemical species~$\mathcal{S}$ given by~(\ref{setSchemsp}) and chemical reactions~$\mathcal{R}$ given by~(\ref{deduced_reactions}) is the CRN which we will use to prove Theorem~\ref{thmcrn1} in Section~\ref{sec6}. The corresponding set of reaction complexes~$\mathcal{C}$ can be inferred from the provided lists of reactions~$\mathcal{R}_i$ and $\mathcal{R}_i^*$, $i=1,2,\dots,K$, given by~(\ref{setri}) and~(\ref{setristar}).

\section{Proof of Theorem~\ref{thmcrn1}}
\label{sec6}

The idea of the proof of Theorem~\ref{thmcrn1} is similar to the one chosen in Sections~\ref{sec3} and~\ref{sec4}, where we have first proved Lemma~\ref{lem3} about the existence of $K$ limit cycles in the planar ODE system~(\ref{xode2})--(\ref{yode2}) and then we have used it to prove the existence of $K$ limit cycles in the $(K+2)$-dimensional ODE system in Lemma~\ref{lem4}. In this section, we will again start by formulating
Lemma~\ref{lem5} for a planar ODE system, which we will use in Lemma~\ref{lem6} to prove Theorem~\ref{thmcrn1} considering the $(K+2)$-dimensional ODE system~(\ref{xode5})--(\ref{viode5}).
The planar ODE system is derived by applying the $x$-factorable transformation to the planar ODE system~(\ref{xode2})--(\ref{yode2}). We obtain
\begin{eqnarray}
\frac{\mbox{d}x}{\mbox{d}t} 
\!&=&\! 
\sum_{k=1}^K 
x\,\,f_k(x-a_k,y-b_k) 
\!=\! 
x\,\,f(x,y)
\,, \quad\; 
\label{xode6}\\
\frac{\mbox{d}y}{\mbox{d}t} 
\!&=&\! 
\sum_{k=1}^K y\,\,g_k(x-a_k,y-b_k)
\!=\! 
y\,\,g(x,y)
\,, \quad\;
\label{yode6}
\end{eqnarray}
where we have used the notation $f_k(\cdot,\cdot)$ and $g_k(\cdot,\cdot)$ introduced in equations~(\ref{f_k}), (\ref{g_k}) and (\ref{fgdef}). 

The dynamics of the ODE system~(\ref{xode6})--(\ref{yode6}) is similar to the dynamics of the original planar ODE system~(\ref{xode2})--(\ref{yode2}) in the same way as the dynamics of the $(K+2)$-dimensional extended  ODE system~(\ref{xode5})--(\ref{viode5}) is similar to the dynamics of the $(K+2)$-dimensional extended ODE system~(\ref{xode4})--(\ref{viode4}).
We have already observed in Figure~\ref{fig3}(a) that the limit cycle around the point $(a_i,b_i)=(6,6)$ of the ODE system~(\ref{xode4})--(\ref{viode4}) is relatively circular. On the other hand, the shape of the
limit cycles can more significantly differ between Figures~\ref{fig2}(b) and~\ref{fig3}(a) if the corresponding parameters $a_i$ and $b_i$ are not equal to each other. Motivated by this observation, we will study the case $a_i = b_i$ in Lemma~\ref{lem5} and prove that it is possible to choose these parameters in a way that the planar ODE system~(\ref{xode6})--(\ref{yode6}) has (at least) $K$ stable limit cycles. This result is sufficient for the proof of Theorem~\ref{thmcrn1}. However, we also note that the existence of limit cycles of the ODE system~(\ref{xode6})--(\ref{yode6}) is not restricted to the case $a_i = b_i$ and a more general lemma could be stated and proven, as we did in Lemma~\ref{lem3} where the existence of $K$ limit cycles has been proven under a relatively general condition~(\ref{sepass}). The advantage of the case $a_i = b_i$ is that it simplifies the proof of Lemma~\ref{lem5}, because we can use the approach and notations introduced in the proof of Lemma~\ref{lem3}.

\begin{lemma}
\label{lem5}
Let us assume that 
\begin{equation}
a_i= b_i=8 {\hskip 0.2mm} i {\hskip 0.15mm} K
\label{lem5ass}
\end{equation}
for
$i=1,2,\dots,K.$
Then the ODE system~$(\ref{xode6})$--$(\ref{yode6})$ has at least $K$ stable limit cycles.
\end{lemma}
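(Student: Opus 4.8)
The plan is to repeat the argument of Lemma~\ref{lem3} for the $x$-factorable system~(\ref{xode6})--(\ref{yode6}), using the \emph{same} annular regions $\Omega_i$ from~(\ref{omegaidef}), now centred at $(a_i,b_i)=(8iK,8iK)$. First I would check that the choice~(\ref{lem5ass}) satisfies the separation hypothesis~(\ref{sepass}): for $i\neq j$ one has $(a_i-a_j)^2+(b_i-b_j)^2 = 128\,K^2(i-j)^2\ge 128\,K^2 > 15(K^{2/3}+2)$ for every $K\ge 1$. Hence the $\Omega_i$ are pairwise disjoint, and by the proof of Lemma~\ref{lem3} each of them contains no equilibrium of $(f,g)$ while $(f,g)$ points strictly into $\Omega_i$ along both components~(\ref{boundary1}) and~(\ref{boundary2}) of $\partial\Omega_i$. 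Since on $\Omega_i$ we have $x\ge a_i-\sqrt2=8iK-\sqrt2>0$ and $y\ge b_i-\sqrt2>0$, the right-hand side $(xf(x,y),yg(x,y))$ of~(\ref{xode6})--(\ref{yode6}) vanishes on $\Omega_i$ exactly where $(f,g)$ does, so $\Omega_i$ contains no equilibrium of the $x$-factorable system either.

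The heart of the proof is to show that the $x$-factorable field still points strictly into $\Omega_i$ everywhere on $\partial\Omega_i$. For $(x_b,y_b)\in\partial\Omega_i$, writing $x_b=a_i+(x_b-a_i)$ and $y_b=b_i+(y_b-b_i)$ and using $a_i=b_i$, the scalar product of $(x_b-a_i,y_b-b_i)$ with the $x$-factorable field splits as
\begin{equation}
(x_b-a_i)\,x_b\,f(x_b,y_b)+(y_b-b_i)\,y_b\,g(x_b,y_b) \;=\; a_i\,S(x_b,y_b)+R(x_b,y_b),
\label{planflux}
\end{equation}
where $S(x_b,y_b)=(x_b-a_i)f(x_b,y_b)+(y_b-b_i)g(x_b,y_b)$ is precisely the scalar product~(\ref{scalar_product}) already controlled in the proof of Lemma~\ref{lem3}, and $R(x_b,y_b)=(x_b-a_i)^2f(x_b,y_b)+(y_b-b_i)^2g(x_b,y_b)$ is a correction term; the identity~(\ref{planflux}) uses $a_i=b_i$ in an essential way. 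From the proof of Lemma~\ref{lem3} we know $S\le-2/9+\delta_K$ on~(\ref{boundary1}) and $S\ge 2/9-\delta_K$ on~(\ref{boundary2}), with a negligible error $\delta_K$ (coming from the terms $k\neq i$) since the much stronger separation~(\ref{lem5ass}) makes the distance $d\ge 8\sqrt2\,K-\sqrt2$ in~(\ref{fk_est3}) and the estimates following it; in particular $\delta_1=0$. For $R$, substituting polar coordinates $x_b-a_i=r\cos\theta$, $y_b-b_i=r\sin\theta$ exactly as in Lemma~\ref{lem3}, a short computation on the outer circle $r^2=2$ gives $|R|\le \frac{2(\sqrt2+1)}{3}+\delta_K'$ with $\delta_K'$ negligible and $\delta_1'=0$, and an even smaller bound on the inner circle $r^2=1/2$.

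Because $a_i=8iK\ge 8$, the term $a_iS$ dominates $R$ in~(\ref{planflux}) for every admissible pair $(i,K)$: the tightest case is $i=K=1$, where $a_1|S|\ge 8\cdot\tfrac29=\tfrac{16}{9}>\tfrac{2(\sqrt2+1)}{3}\ge|R|$, while for all other pairs $a_i\ge 16$ makes the domination immediate. Hence the scalar product~(\ref{planflux}) is strictly negative on~(\ref{boundary1}) and strictly positive on~(\ref{boundary2}), i.e.\ the $x$-factorable field points strictly into $\Omega_i$ along all of $\partial\Omega_i$. Therefore each $\overline{\Omega_i}$ is a compact trapping region for the smooth planar system~(\ref{xode6})--(\ref{yode6}) containing no equilibria, the Poincar\'e--Bendixson theorem produces at least one stable limit cycle inside each $\Omega_i$, and since the $\Omega_i$ are pairwise disjoint, the system~(\ref{xode6})--(\ref{yode6}) has at least $K$ stable limit cycles. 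I expect the main obstacle to be precisely this last inequality: the naive bound $|R|\le 2\max(|f|,|g|)$ on $\partial\Omega_i$ is too crude to beat $a_iS$ when $iK$ is small, so one has to genuinely exploit the cancellation visible in the polar form of $R$ (together with the negligibility of the $k\neq i$ cross terms secured by the generous separation~(\ref{lem5ass})) to push $|R|$ below $a_i|S|$ even for $a_i$ as small as $8$.
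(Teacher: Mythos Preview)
Your proposal is correct and follows the same overall strategy as the paper: re-use the annuli $\Omega_i$ from Lemma~\ref{lem3}, transfer the ``no equilibria'' conclusion (since $x,y>0$ on $\Omega_i$), show the $x$-factorable field points inward on $\partial\Omega_i$ by treating the flux as ``main term from Lemma~\ref{lem3}'' plus ``small correction'', and apply Poincar\'e--Bendixson.

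The one genuine difference is in the algebraic splitting of the boundary flux. The paper writes
\[
(x_b f,\,y_b g) \;=\; x_b\,(f,g) \;+\; \big(0,\,(y_b-x_b)\,g\big),
\]
so the flux becomes $x_b\,S + (z_2-z_1)z_2\,g$; using the strong separation~(\ref{lem5ass}) they sharpen the Lemma~\ref{lem3} bound on $S$ so that $|x_b S|\ge 1.45$, and then bound $|(z_2-z_1)z_2\,g_i|\le 1.4$ directly (plus a negligible cross contribution). Your decomposition instead pulls out the constant $a_i$ rather than the variable $x_b$, giving $a_i S + R$ with $R=z_1^2 f+z_2^2 g$; this is the more symmetric (and arguably more natural) choice, and it makes the dominance transparent since $a_i\ge 8$ while $|S|\ge 2/9$ immediately gives $a_i|S|\ge 16/9$. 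Your anticipated obstacle is real but easily overcome: on $\partial\Omega_{i1}$ one has, with $z_1^2+z_2^2=2$, the crude estimate $|R_i|\le (|z_1|+|z_2|)(z_1^2+z_2^2)/D_{\min}=4/3$ (in fact a direct calculation gives $\max_\theta|R_i|\approx 0.69$), comfortably below $16/9$; on $\partial\Omega_{i2}$ the bound is indeed smaller still. So both decompositions work, and yours avoids the slightly awkward asymmetry of the paper's $(0,(y_b-x_b)g)$ term at the cost of estimating a two-component correction instead of a one-component one.
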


\begin{proof}
Let us define regions $\Omega_i \subset {\mathbb R}^2$, $i=1,2,\dots,K,$ together with their boundary parts $\partial \Omega_{i1}$ and $\partial \Omega_{i2}$ by (\ref{omegaidef}), (\ref{boundary1}) and (\ref{boundary2}). Our choice of values of $a_i$ and $b_i$ in (\ref{lem5ass}) satisfies the assumption~(\ref{sepass}) in Lemma \ref{lem3}. Therefore, the ODE system~(\ref{xode2})--(\ref{yode2}) has with parameters given by~(\ref{lem5ass}) at least $K$ stable limit cycles. Moreover, in the proof of Lemma~\ref{lem3}, we have shown that each region $\Omega_i$ does not include any equilibrium of the planar ODE system~(\ref{xode2})--(\ref{yode2}).
Any equilibrium of the ODE system~(\ref{xode6})--(\ref{yode6}) is either located on the $x$-axis or $y$-axis, or it is also an equilibrium of the ODE system~(\ref{xode2})--(\ref{yode2}). 
However, our assumption~(\ref{lem5ass}) implies that no region $\Omega_i$, $i=1,2,\dots,K,$ intersects with the $x$-axis or $y$-axis. Therefore, we conclude that each $\Omega_i$, for $i=1,2,\dots,K$, does not contain any equilibrium of the ODE system~(\ref{xode6})--(\ref{yode6}).  

Next, consider any point $(x_b,y_b)\in\partial\Omega_i$. We will compute the scalar product of vectors 
\begin{equation}
(x_b-a_i,y_b-b_i)
\qquad
\mbox{and}
\qquad
\big(x_b\,f(x_b,y_b), y_b\,g(x_b,y_b)\big)
\label{scalar_vectors2}
\end{equation}
by rewriting the second vector as
a sum of two vectors
\begin{equation}
\big(x_b\,f(x_b,y_b), y_b\,g(x_b,y_b)\big)
=
x_b \big(f(x_b,y_b),g(x_b,y_b)\big)
+
\big(0, (y_b-x_b) \, g(x_b,y_b) \big).
\label{revsecvec}
\end{equation}
The scalar product of vectors
\begin{equation}
(x_b-a_i,y_b-b_i)
\qquad
\mbox{and}
\qquad
x_b \big(f(x_b,y_b), g(x_b,y_b)\big)
\label{scalar_vectors3}
\end{equation}
has already been calculated in the proof of Lemma~\ref{lem3} starting with equation~(\ref{scalar_vectors}). We obtained that it is negative for $(x_b,y_b)\in\partial\Omega_{i1}$ and positive for $(x_b,y_b)\in\partial\Omega_{i2}$. Therefore, the vector~$x_b \big(f(x_b,y_b), g(x_b,y_b)\big)$ always points inside the domain $\Omega_i$ on all parts of the boundary $\partial \Omega_i.$ Next, we want to show that this conclusion also holds if vector~$x_b \big(f(x_b,y_b), g(x_b,y_b)\big)$ is modified by adding the vector~$\big(0, (y_b-x_b) \, g(x_b,y_b) \big)$  as it is done in equation~(\ref{revsecvec}). To do this, we note that our choice of parameters~(\ref{lem5ass}) implies that
$$
(a_i-a_j)^2+(b_i-b_j)^2 
=
128 {\hskip 0.2mm} (i-j)^2 K^2
$$
for all $i,j=1,2,\dots,K$, which not only satisfies the assumption~(\ref{sepass})
but it can be used in equation~(\ref{destimates}) to make a stronger conclusion that the scalar product of vectors~(\ref{scalar_vectors3}) is at most~$-1.45$ for $(x_b,y_b)\in\partial\Omega_{i1}$ and at least $1.45$ for $(x_b,y_b)\in\partial\Omega_{i2}$. Thus, we only need to show that the scalar product of vectors
\begin{equation}
(x_b-a_i,y_b-b_i)
\qquad
\mbox{and}
\qquad
\big(0, (y_b-x_b) \, g(x_b,y_b) \big)
\label{scalar_vectors4}
\end{equation}
is in absolute value less than $1.45$ to conclude that the original scalar product~(\ref{scalar_vectors2}) is negative for $(x_b,y_b)\in\partial\Omega_{i1}$ and positive for $(x_b,y_b)\in\partial\Omega_{i2}$. 
Using the definition of $g(\cdot,\cdot)$ in~(\ref{fgdef}) and the notation~$z_1=x_b-a_i$, $z_2=y_b-b_i$ introduced in the proof of Lemma~\ref{lem3}, we have $y_b-x_b=z_2-z_1$ and
the scalar product~(\ref{scalar_vectors4}) can be written as
\begin{equation}
(z_2-z_1)\,z_2\,\,
g_i(z_1,z_2)
+
(z_2-z_1) \, z_2 \!\!\!\!
\sum_{k=1, k \ne i}^K
\!\!
g_k(x_b-a_k,y_b-a_k).
\label{auxscalarprod}
\end{equation}
Since we have
$$
\max_{(x_b,y_b)\in\partial\Omega_i}
\big\vert
(z_2-z_1)\,z_2\,\,
g_i(z_1,z_2)
\big\vert
=
\max_{z_1^2+z_2^2=2 \; 
(\mbox{{\scriptsize or}} \;
1/2)}
\big\vert
(z_2-z_1)\,z_2\,\,
g_i(z_1,z_2)
\big\vert
\le
1.4
$$
and the second term in~(\ref{auxscalarprod}) is also less than $0.05$, we conclude that the scalar product~(\ref{scalar_vectors2}) is negative for $(x_b,y_b)\in\partial\Omega_{i1}$ and positive for $(x_b,y_b)\in\partial\Omega_{i2}$. Therefore, the vector~$\big(x_b\,f(x_b,y_b), y_b\,g(x_b,y_b)\big)$ always points inside the domain $\Omega_i$ on all parts of the boundary $\partial \Omega_i.$ 
In particular, applying Poincar\'e-Bendixson theorem, we conclude that each $\Omega_i$ contains at least one stable limit cycle. Since $\Omega_i$, $i=1,2,\dots,K,$ are pairwise disjoint, this implies that the ODE system~(\ref{xode6})--(\ref{yode6}) has at least $K$ stable limit cycles.
\end{proof}

\begin{lemma}
\label{lem6}
Let us assume that constants $a_i,$ $b_i$, $i=1,2,\dots,K$ are given by~$(\ref{lem5ass})$.
Then there exists $\varepsilon_0>0$ such that the reaction rate equations~$(\ref{xode5})$--$(\ref{viode5})$ have at least $K$ stable limit cycles for all $\varepsilon \in (0,\varepsilon_0).$
\end{lemma}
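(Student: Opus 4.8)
The plan is to repeat the singular-perturbation argument used in the proof of Lemma~\ref{lem4}, but with Lemma~\ref{lem5} playing the role that Lemma~\ref{lem3} played there. First I would set $\varepsilon=0$ in the fast equations~(\ref{viode5}); since the right-hand side is affine in $v_i$, it can be solved explicitly to give $v_i = q_i(x,y)$ with $q_i$ as in~(\ref{defqi}). Substituting these expressions into~(\ref{xode5})--(\ref{yode5}) and using that $q_k(x,y)[(x-a_k)\{1-(x-a_k)^2-(y-b_k)^2\}-(y-b_k)] = f_k(x-a_k,y-b_k)$ and similarly for $g_k$, the reduced problem in the sense of Tikhonov's theorem becomes exactly the planar $x$-factorable system~(\ref{xode6})--(\ref{yode6}). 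By Lemma~\ref{lem5}, with the parameter choice~(\ref{lem5ass}) this reduced system has at least $K$ stable limit cycles $(\overline{x}_{c,i}(t),\overline{y}_{c,i}(t))$, $i=1,2,\dots,K$, with periods $T_i>0$ and with basins of attraction containing neighbourhoods in the sense of~(\ref{epsidef}).

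Next I would check the boundary-layer (adjoined) system: freezing $x>0$, $y>0$ as parameters, equation~(\ref{viode5}) reads $\varepsilon\,\mathrm{d}v_i/\mathrm{d}t = 1 - v_i[1+(x-a_i)^6+(y-b_i)^6]$, a decoupled linear system whose unique equilibrium $v_i=q_i(x,y)$ has linearisation $-[1+(x-a_i)^6+(y-b_i)^6]<0$; hence it is exponentially stable and globally attracting, uniformly on compact sets of $(x,y)$. This verifies the hypotheses of Tikhonov's theorem~\cite{Tikhonov:1952:SDE,Klonowski:1983:SPC} exactly as in the proof of Lemma~\ref{lem4}.

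Then, for each $i$, I would define the pairwise disjoint open sets $\Omega_i\subset\mathbb{R}^{K+2}$ as in~(\ref{defomegailem4}), i.e. tubular neighbourhoods of the lifted periodic orbits $t\mapsto(\overline{x}_{c,i}(t),\overline{y}_{c,i}(t),q_1(\overline{x}_{c,i}(t),\overline{y}_{c,i}(t)),\dots,q_K(\overline{x}_{c,i}(t),\overline{y}_{c,i}(t)))$, and set $\varepsilon_0=\min_i\varepsilon_i$. For $\varepsilon\in(0,\varepsilon_0)$, Tikhonov's theorem gives that any trajectory of~(\ref{xode5})--(\ref{viode5}) starting in $\Omega_i$ stays uniformly close to the corresponding trajectory of the reduced system~(\ref{xode6})--(\ref{yode6}), which converges to the $i$-th limit cycle; combining this with the contraction in the fast direction towards $q_i(\cdot,\cdot)$ yields a stable limit cycle of the full system inside each $\Omega_i$ in the sense of Definition~\ref{deflc}. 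Since the $\Omega_i$ are pairwise disjoint, this produces $K$ distinct stable limit cycles, and the corresponding CRN is the one specified in Section~\ref{sec5}.

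The point to watch — rather than a genuine obstacle — is the uniform-in-time transfer of attractivity: Tikhonov's theorem is naturally a statement on compact time intervals, so one must combine the contraction of the boundary layer with the orbital stability of each reduced limit cycle (established in Lemma~\ref{lem5}) to upgrade closeness on $[0,T_i]$ to convergence as $t\to\infty$, exactly as was done in Lemma~\ref{lem4}. No new estimates beyond those in Lemmas~\ref{lem3}--\ref{lem5} are needed; the choice $a_i=b_i=8{\hskip0.2mm}i{\hskip0.15mm}K$ already places every $\Omega_i$ in the open positive quadrant away from the coordinate axes, so the $x$-factorable modification introduces no spurious equilibria inside $\Omega_i$.
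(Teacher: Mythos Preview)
Your proof is correct and follows exactly the same approach as the paper: the paper's proof of Lemma~\ref{lem6} is a single sentence stating that the result follows directly from Lemma~\ref{lem5} and Tikhonov's theorem, and your sketch is precisely a spelled-out version of that one-line argument, mirroring the structure of the proof of Lemma~\ref{lem4} with Lemma~\ref{lem5} substituted for Lemma~\ref{lem3}.
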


\begin{proof}
This follows directly from Lemma~\ref{lem5} and Tikhonov's theorem~\cite{Tikhonov:1952:SDE,Klonowski:1983:SPC}.
\end{proof}
 
\noindent
The existence of $K$ limit cycles in the CRN~(\ref{deduced_reactions}) follows by application of Lemma~\ref{lem6}. The chemical system~(\ref{deduced_reactions}) has $(K+2)$ chemical species $X,$ $Y,$ $V_1$, $V_2$, \dots, $V_K$, which are subject to $29K$ chemical reactions, so, by construction, we also establish
bounds in part (ii) of Theorem~\ref{thmcrn1} on $N(K)$ and $M(K)$. This concludes the proof of Theorem~\ref{thmcrn1}.

\section{Proof of Theorem~\ref{thmcrn2}}
\label{sec7}

In Theorem~\ref{thmcrn1}, we have established that the reaction rate equations~(\ref{xode5})--(\ref{viode5}) describing the CRN~(\ref{deduced_reactions}) have at least $K$ stable limit cycles. Since the right-hand sides of ODEs~(\ref{xode5})--(\ref{viode5}) include polynomials up to the order 7, the resulting chemical reactions~(\ref{deduced_reactions}) are reactions of the order at most 7. However, in practice, every higher-order reactions can be subdivided into elementary steps, which are at most bimolecular (second order). Therefore, we focus here on the proof of Theorem~\ref{thmcrn2} which restricts our considerations to at most second-order kinetics. We prove it by further extending the number of variables in the reaction rate equations~(\ref{xode5})--(\ref{viode5}), i.e. by adding intermediary chemical species and elementary reactions into the CRN~(\ref{deduced_reactions}). The resulting CRN has $N=7K+14$ chemical species denoted by
\begin{equation}
\mathcal{S} = \big\{
X, Y, W_1, W_2, \dots, W_{12}
\big\}
\cup
\bigcup_{i=1}^K 
\left\{
V_i,
Z_{i,1},Z_{i,2},Z_{i,3},Z_{i,4},Z_{i,5},Z_{i,6} 
\right\},
\label{species2ndorder}    
\end{equation}
where we use the notation introduced in Definition~\ref{defcrn} of CRNs. The concentrations
$x,$ $y,$ $v_i$, $w_1$, $w_2$, $\dots$, $w_{12}$, $z_{i,j}$ for $i=1,2,\dots,K$ and $j=1,2,\dots,6$ evolve according to reaction rate equations
\begin{eqnarray}
\frac{\mbox{d}x}{\mbox{d}t} 
\!&=&\! 
\sum_{i=1}^K  
\bigg[- x z_{i,3}
+ k_{i,12} \, v_i w_2
- k_{i,13} \, x z_{i,1}
+ k_{i,14} \, v_i x
+ a_i \, v_i w_{11}
\nonumber
\\
&& \qquad
+ k_{i,15} \, v_i w_{12}
- k_{i,16} \, x z_{i,2}
- \, x z_{i,5} \bigg], \quad\; 
\label{expandedx2}
\\
\frac{\mbox{d}y}{\mbox{d}t} 
\!&=&\! 
\sum_{k=1}^K 
\bigg[- y z_{i,4}
+ k_{i,17} \, v_i w_7
- k_{i,18} \, y z_{i,2}
+ k_{i,19} \, v_i y
+ b_i \, v_i w_{12}
\nonumber
\\
&& \qquad
+ k_{i,20} \, v_i w_{11}
- k_{i,21} \, y z_{i,1}
- \, y z_{i,6} \bigg], \quad\;
\label{expandedy2}
\\
\frac{\mbox{d}v_i}{\mbox{d}t}
\!&=&\! 
-
k_{i,1} \, v_i
+ 
k_{i,2} \, v_i x
+
k_{i,3} \, v_i y
-
k_{i,4} \, v_i w_1
-
k_{i,5} \, v_i w_6
\nonumber
\\
&& \quad 
+ \,
k_{i,6} \, v_i w_2
+
k_{i,7} \, v_i w_7
-
k_{i,8} \, v_i w_3
-
k_{i,9} \, v_i w_8
\label{expandedVi2}
\\
&& \quad
+ \,
k_{i,10} \, v_i w_4
+
k_{i,11} \, v_i w_9
-
v_i w_5 / \varepsilon
-
v_i w_{10} / \varepsilon
+
1 / \varepsilon,
\nonumber
\\
\delta \,
\frac{\mbox{d}w_1}{\mbox{d}t}
\!&=&\!
x^2 - w_1,
\qquad
\delta \,
\frac{\mbox{d}w_2}{\mbox{d}t}
=
x w_1 - w_2,
\qquad
\delta \,
\frac{\mbox{d}w_3}{\mbox{d}t}
=
x w_2 - w_3,
\label{w123eq}
\\
\delta \,
\frac{\mbox{d}w_4}{\mbox{d}t}
\!&=&\!
x w_3 - w_4,
\qquad
\delta \,
\frac{\mbox{d}w_5}{\mbox{d}t}
=
x w_4 - w_5,
\qquad
\delta \,
\frac{\mbox{d}w_6}{\mbox{d}t}
=
y^2 - w_6,
\label{w456eq}
\\
\delta \,
\frac{\mbox{d}w_7}{\mbox{d}t}
\!&=&\!
y w_6 - w_7,
\qquad
\delta \,
\frac{\mbox{d}w_8}{\mbox{d}t}
=
y w_7 - w_8,
\qquad
\delta \,
\frac{\mbox{d}w_9}{\mbox{d}t}
=
y w_8 - w_9,
\label{w789eq}
\\
\delta \,
\frac{\mbox{d}w_{10}}{\mbox{d}t}
\!&=&\!
y w_9 - w_{10},
\quad\;
\delta \,
\frac{\mbox{d}w_{11}}{\mbox{d}t}
=
x w_6 - w_{11},
\quad\;
\delta \,
\frac{\mbox{d}w_{12}}{\mbox{d}t}
=
y w_1 - w_{12},
\label{w101112eq}
\\
\delta \,
\frac{\mbox{d}z_{i,1}}{\mbox{d}t}
\!&=&\!
v_i x - z_{i,1},
\qquad
\delta \,
\frac{\mbox{d} z_{i,2}}{\mbox{d}t}
=
v_i y - z_{i,2},
\qquad
\delta \,
\frac{\mbox{d} z_{i,3}}{\mbox{d}t}
=
v_i w_2 - z_{i,3},
\label{zi1i2i3eq}
\\
\delta \,
\frac{\mbox{d} z_{i,4}}{\mbox{d}t}
\!&=&\!
v_i w_7 - z_{i,4},
\quad
\delta \,
\frac{\mbox{d} z_{i,5}}{\mbox{d}t}
=
v_i w_{11} - z_{i,5},
\quad
\delta \,
\frac{\mbox{d} z_{i,6}}{\mbox{d}t}
=
v_i w_{12} - z_{i,6},
\quad
\label{zi4i5i6eq}
\end{eqnarray}
where $\delta>0$, $\varepsilon>0$ and $k_{i,j}$, $i=1,2,\dots,K,$ $j=1,2,\dots,21,$ are positive constants given by~(\ref{k1list1}) and (\ref{kilist2}).
Considering the limit $\delta \to 0$ in equations~(\ref{w123eq})--(\ref{zi4i5i6eq}), we obtain
$$
w_1 = x^2, \quad
w_2 = x^3, \quad
w_3 = x^4, \quad
w_4 = x^5, \quad
w_5 = x^6, \quad
w_6 = y^2, 
$$
\begin{equation}
w_7 = y^3, \quad
w_8 = y^4, \quad
w_9 = y^5, \quad
w_{10} = y^6, \quad
w_{11} = x y^2, \quad
w_{12} = x^2 y,
\label{wvallimdel}
\end{equation}
$$
z_{i,1} = v_i x, \quad
z_{i,2} = v_i y, \quad
z_{i,3} = v_i x^3 \! , \quad
z_{i,4} = v_i y^3 \! , \quad
z_{i,5} = v_i x y^2 \! , \quad
z_{i,6} = v_i x^2 y.
$$
Substituting the limiting values~(\ref{wvallimdel}) for $w_\ell$ and $z_{i,j}$, $\ell=1,2,\dots,12,$ $i=1,2\dots,K$, $j=1,2,\dots,6$, into equations~(\ref{expandedx2})--(\ref{expandedVi2}), we obtain equations~(\ref{expandedx}),
(\ref{expandedy}) and~(\ref{expandedVi}), which are equal to the reaction rate equations~(\ref{xode5})--(\ref{viode5}). In particular, we deduce the following lemma.

\begin{lemma}
\label{lem7}
Let us assume that constants $a_i,$ $b_i$, $i=1,2,\dots,K$ are given by~$(\ref{lem5ass})$.
Then there exist $\delta_0>0$ and $\varepsilon_0>0$ such that the reaction rate equations~$(\ref{expandedx2})$--$(\ref{zi4i5i6eq})$ have at least $K$ stable limit cycles for all $\delta \in (0,\delta_0)$ and $\varepsilon \in (0,\varepsilon_0).$
\end{lemma}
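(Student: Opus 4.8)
The plan is to treat the system~$(\ref{expandedx2})$--$(\ref{zi4i5i6eq})$ as a fast--slow system in the small parameter $\delta$, with the auxiliary concentrations $w_1,\dots,w_{12}$ and $z_{i,j}$ (for $i=1,\dots,K$, $j=1,\dots,6$) playing the role of fast variables and $x,y,v_1,\dots,v_K$ the slow ones, and then to invoke Lemma~\ref{lem6} for the reduced dynamics. First I would observe that the layer (adjoined) system obtained by freezing $x,y,v_i$ in equations~$(\ref{w123eq})$--$(\ref{zi4i5i6eq})$ is affine in $(w_\ell,z_{i,j})$ with a coefficient matrix that, in a suitable ordering of the fast variables, is lower triangular with $-1$ on the diagonal: each equation of the cascade ($w_1$ driven by $x^2$, then $w_2$ by $x w_1$, \dots, $w_5$ by $x w_4$; $w_6$ by $y^2$, \dots, $w_{10}$ by $y w_9$; $w_{11}$ by $x w_6$, $w_{12}$ by $y w_1$; and finally each $z_{i,j}$ by $v_i$ times one of $x,y,w_2,w_7,w_{11},w_{12}$) feeds only forward. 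Hence for every frozen value of $(x,y,v_i)$ this layer system has a unique equilibrium, namely the values listed in~$(\ref{wvallimdel})$, which is globally exponentially stable with decay rate independent of $\varepsilon$ and uniform over compact sets of slow values (since none of~$(\ref{w123eq})$--$(\ref{zi4i5i6eq})$ contains $\varepsilon$). The critical manifold is therefore the normally hyperbolic attracting graph~$(\ref{wvallimdel})$ over the $(x,y,v_i)$-coordinates.

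Next I would use the algebraic fact, already verified in the paragraph preceding Lemma~\ref{lem7}, that substituting~$(\ref{wvallimdel})$ into~$(\ref{expandedx2})$--$(\ref{expandedVi2})$ reproduces exactly~$(\ref{expandedx})$, $(\ref{expandedy})$ and~$(\ref{expandedVi})$, i.e. the reaction rate equations~$(\ref{xode5})$--$(\ref{viode5})$. Thus the reduced problem attached to the $\delta$-perturbation, in the sense of Tikhonov's theorem~\cite{Tikhonov:1952:SDE,Klonowski:1983:SPC}, is precisely the $(K+2)$-dimensional system of Lemma~\ref{lem6}. By Lemma~\ref{lem6} there is $\varepsilon_0>0$ such that for every $\varepsilon\in(0,\varepsilon_0)$ this reduced system has at least $K$ stable limit cycles, and --- unravelling the proof of Lemma~\ref{lem6}, which applies Tikhonov's theorem in $\varepsilon$ to~$(\ref{xode5})$--$(\ref{viode5})$ with planar reduced system~$(\ref{xode6})$--$(\ref{yode6})$ (Lemma~\ref{lem5}), exactly as in Lemma~\ref{lem4} --- these limit cycles lie in pairwise disjoint attracting sets $\Omega_1',\dots,\Omega_K'\subset\mathbb{R}^{K+2}$ that can be chosen independently of $\varepsilon$, being built from the $\varepsilon$-independent planar annuli~$(\ref{omegaidef})$ around the points $(a_i,b_i)=(8 i K,8 i K)$ together with fixed neighbourhoods of the slow manifold $v_j=q_j(x,y)$ from~$(\ref{defqi})$.

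With these ingredients I would apply Tikhonov's theorem in $\delta$ exactly as in the proof of Lemma~\ref{lem4}: for $\delta$ small the fast variables collapse onto the critical manifold in time $O(\delta)$, after which the slow components $(x,y,v_i)$ shadow a solution of~$(\ref{xode5})$--$(\ref{viode5})$ started in $\Omega_i'$, which in turn converges to the limit cycle contained in $\Omega_i'$. Since the layer equilibrium is globally attracting and the reduced flow has a stable limit cycle inside each $\Omega_i'$, the same reasoning as in Lemma~\ref{lem4} shows that the full system possesses a stable limit cycle inside $\widetilde\Omega_i:=\Omega_i'\times U_i$, where $U_i$ is a fixed neighbourhood of the critical manifold in the fast directions. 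As the regions $\widetilde\Omega_i$, $i=1,\dots,K$, are pairwise disjoint, this yields at least $K$ stable limit cycles.

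The step I expect to be the real obstacle is the uniformity of the threshold $\delta_0$ over $\varepsilon\in(0,\varepsilon_0)$: a priori Tikhonov's theorem only yields $\delta_0=\delta_0(\varepsilon)$, and the problem is genuinely three-scale ($\delta\ll\varepsilon\ll1$, because the $v_i$-equation~$(\ref{expandedVi2})$ carries $1/\varepsilon$ factors through the constants~$(\ref{k1list1})$). The resolution is the observation already flagged above: the fastest layer~$(\ref{w123eq})$--$(\ref{zi4i5i6eq})$ is completely free of $\varepsilon$, so its normal-hyperbolicity constants are $\varepsilon$-uniform, and the sets $\Omega_i'$ together with the tracking estimates inherited from Lemma~\ref{lem6} are $\varepsilon$-uniform on the compact sets that matter; taking the minimum over $i=1,\dots,K$ of the finitely many resulting thresholds then produces a single $\delta_0>0$ valid for all $\varepsilon\in(0,\varepsilon_0)$. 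If one only needs $\delta_0$ to depend on $\varepsilon$, the argument collapses to the one-line proof ``Lemma~\ref{lem6} together with Tikhonov's theorem'', in the same spirit as the proof of Lemma~\ref{lem6} itself.
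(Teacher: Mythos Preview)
Your approach is correct and is essentially the same as the paper's: the paper's entire proof is the one-liner ``This follows directly from Lemma~\ref{lem6} and Tikhonov's theorem,'' and you have simply unpacked that argument (layer system, critical manifold~$(\ref{wvallimdel})$, reduced system equals~$(\ref{xode5})$--$(\ref{viode5})$) and added a careful discussion of the uniformity of $\delta_0$ in $\varepsilon$ that the paper does not make explicit. Your closing sentence anticipates the paper's proof verbatim.
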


\begin{proof}
This follows directly from Lemma~\ref{lem6} and Tikhonov's theorem~\cite{Tikhonov:1952:SDE,Klonowski:1983:SPC}.
\end{proof}

\noindent
The right-hand sides of reaction rate equations~(\ref{expandedx2})--(\ref{zi4i5i6eq}) only include quadratic terms. Therefore, there exists a CRN corresponding to reaction rate equations~(\ref{expandedx2})--(\ref{zi4i5i6eq}) which includes (at most) second-order reactions. We can obtain it by applying the construction in the proof of Lemma~\ref{lem1}. The right-hand sides of equations (\ref{expandedx2}) and (\ref{expandedy2}) can be interpreted as the set of $16 \, K$ chemical reactions
(compare with~(\ref{setristar}) for ODEs~(\ref{xode5})--(\ref{yode5}))
\begin{eqnarray}
\mathcal{R}_i^{s,*}
\!&=&\!
\left\{
X + Z_{i,3} \mathop{\longrightarrow}^{1} Z_{i,3}, 
\quad
V_i + W_2 \mathop{\longrightarrow}^{k_{i,12}} V_i + W_2 + X,
\quad
X + Z_{i,1} \mathop{\longrightarrow}^{k_{i,13}} Z_{i,1},
\right. 
\nonumber
\\
&&
\;\;
V_i + X \mathop{\longrightarrow}^{k_{i,14}} V_i + 2X,
\qquad
V_i + W_{11} \mathop{\longrightarrow}^{a_i} V_i + W_{11} + X,
\nonumber
\\
&&
\;\;
V_i + W_{12} \mathop{\longrightarrow}^{k_{i,15}} V_i + W_{12} + X,
\qquad
X + Z_{i,2} \mathop{\longrightarrow}^{k_{i,16}} Z_{i,2},
\nonumber
\\
&&
\;\;
X + Z_{i,5} \mathop{\longrightarrow}^{1} Z_{i,5},
\qquad
Y + Z_{i,6} \mathop{\longrightarrow}^{1} Z_{i,6},
\qquad
Y + Z_{i,4} \mathop{\longrightarrow}^{1} Z_{i,4},
\nonumber
\\
&&
\;\;
V_i + W_{7} \mathop{\longrightarrow}^{k_{i,17}} V_i + W_7 + Y,
\quad
Y + Z_{i,2} \mathop{\longrightarrow}^{k_{i,18}} Z_{i,2},
\quad
V_i + Y \mathop{\longrightarrow}^{k_{i,19}} V_i + 2Y,
\nonumber
\\
&&
\;\;
V_i + W_{12} \mathop{\longrightarrow}^{b_i} V_i + W_{12} + Y,
\qquad
V_i + W_{11} \mathop{\longrightarrow}^{k_{i,20}} V_i + W_{11} + Y,
\nonumber
\\
&&
\left.
\;\;
Y + Z_{i,1} \mathop{\longrightarrow}^{k_{i,21}} Z_{i,1}
\right\},
\qquad
\mbox{for} \quad i=1,2,\dots,K.
\label{setristarsecond}
\end{eqnarray}
The right-hand side of equation~(\ref{expandedVi2}) can be interpreted as the set of $14$ chemical reactions for each $i=1,2,\dots,K$ (compare with~(\ref{setri}) for the right-hand side of ODE~(\ref{viode5}))
\begin{eqnarray}
\mathcal{R}_i^{s}
\!&=&\!
\left\{
V_i \mathop{\longrightarrow}^{k_{i,1}} \emptyset, 
\quad
V_i+X \mathop{\longrightarrow}^{k_{i,2}} 2V_i+X,
\quad
V_i+Y \mathop{\longrightarrow}^{k_{i,3}} 2V_i+Y,
\right.
\nonumber
\\
&&
\quad
V_i+W_1 \mathop{\longrightarrow}^{k_{i,4}} W_1,
\qquad
V_i+W_6 \mathop{\longrightarrow}^{k_{i,5}} W_6,
\qquad
V_i+W_2 \mathop{\longrightarrow}^{k_{i,6}} 2V_i+W_2,
\nonumber
\\
&&
\quad
V_i+W_7 \mathop{\longrightarrow}^{k_{i,7}} 2V_i+W_7,
\qquad
V_i+W_3 \mathop{\longrightarrow}^{k_{i,8}} W_3,
\qquad
V_i+W_8 \mathop{\longrightarrow}^{k_{i,9}} W_8,
\nonumber
\\
&&
\quad
V_i+W_4 \mathop{\longrightarrow}^{k_{i,10}} 2V_i+W_4,
\qquad
V_i+W_9 \mathop{\longrightarrow}^{k_{i,11}} 2V_i+W_9,
\nonumber
\\
&&
\left.
\quad\!
V_i+W_5 \mathop{\longrightarrow}^{1 / \varepsilon} W_5,
\qquad
V_i+W_{10} \mathop{\longrightarrow}^{1 / \varepsilon} W_{10},
\qquad
\emptyset \mathop{\longrightarrow}^{1 / \varepsilon} V_i
\right\}.
\label{setrisecond}
\end{eqnarray}
Consequently, reaction rate equations~(\ref{expandedx2})--(\ref{expandedVi2}) correspond to $30\, K$ chemical reactions in sets $\mathcal{R}_i^{s,*}$ and $\mathcal{R}_i^s$, $i=1,2,\dots,K.$ This is already more that $29 \, K$ chemical reactions used in Theorem~\ref{thmcrn1}, because we did not combine two terms on the right-hand sides into one reaction as we did in the set $\mathcal{R}_i^{*}$ (this is further discussed in equation~(\ref{vi2x2y}) in
Section~\ref{sec9}). Moreover, there are additional chemical reactions corresponding to the dynamics of additional chemical species in equations~(\ref{w123eq})--(\ref{zi4i5i6eq}).
The right-hand sides of equations~(\ref{w123eq})--(\ref{w101112eq}) can be interpreted as the set of $24$ chemical reactions given as
\begin{eqnarray}
\mathcal{R}^{w}
\!&=&\!
\left\{
2 X \mathop{\longrightarrow}^{1/\delta} 2X + W_1, 
\quad
2 Y \mathop{\longrightarrow}^{1/\delta} 2 Y + W_6,
\right.
\nonumber
\\
&&
\quad
X + W_j \mathop{\longrightarrow}^{1/\delta} X + W_j + W_{j+1}, \quad \mbox{for} \;\; j=1,2,3,4, 
\nonumber
\\
&&
\quad
Y + W_j \mathop{\longrightarrow}^{1/\delta} Y + W_j + W_{j+1}, \quad \mbox{for} \;\; j=6,7,8,9, 
\nonumber
\\
&&
\quad
X + W_6 \mathop{\longrightarrow}^{1/\delta} X + W_6 + W_{11}, 
\quad
Y + W_1 \mathop{\longrightarrow}^{1/\delta} Y + W_1 +
W_{12},
\nonumber
\\
&&
\left.
\quad
W_\ell \mathop{\longrightarrow}^{1/\delta} \emptyset,
\quad \mbox{for} \;\; \ell = 1,2,\dots,12
\right\}.
\label{setrw}
\end{eqnarray}
Finally, the right-hand sides of equations~(\ref{zi1i2i3eq})-(\ref{zi4i5i6eq}) can be interpreted as the set of $12$ chemical reactions for each $i=1,2,\dots,K$ given by
\begin{eqnarray}
\mathcal{R}_i^{z}
\!&=&\!
\left\{
X + V_i \mathop{\longrightarrow}^{1/\delta} X + V_i + Z_{i,1}, 
\quad
Y + V_i \mathop{\longrightarrow}^{1/\delta} Y + V_i + Z_{i,2},
\right.
\nonumber
\\
&&
\quad
V_i + W_2 \mathop{\longrightarrow}^{1/\delta} V_i + W_2 + Z_{i,3}, 
\quad
V_i + W_7 \mathop{\longrightarrow}^{1/\delta} V_i + W_7 + Z_{i,4},
\nonumber
\\
&&
\quad
V_i + W_{11} \mathop{\longrightarrow}^{1/\delta} V_i + W_{11} + Z_{i,5}, 
\quad
V_i + W_{12} \mathop{\longrightarrow}^{1/\delta} V_i + W_{12} + Z_{i,6},
\nonumber
\\
&&
\left.
\quad
Z_{i,j} \mathop{\longrightarrow}^{1/\delta} \emptyset,
\quad \mbox{for} \;\; j = 1,2,\dots,6
\right\}.
\label{setriz}
\end{eqnarray}
In summary, we conclude that the reaction rate equations~(\ref{expandedx2})--(\ref{zi4i5i6eq}) correspond to the CRN with $N=7K+14$ chemical species and $42 \, K + 24$ chemical reactions given by
\begin{equation}
\mathcal{R}
=
\mathcal{R}^w
\cup
\bigcup_{i=1}^K \mathcal{R}_i^s \cup \mathcal{R}_i^{s,*} \cup \mathcal{R}_i^z.
\label{deduced_reactionssecond} 
\end{equation}
Using Lemma~\ref{lem7}, we deduce that
the CRN $(\mathcal{S},\mathcal{C},\mathcal{R})$ consisting of chemical species~$\mathcal{S}$ given by~(\ref{species2ndorder}) and chemical reactions~$\mathcal{R}$ given by~(\ref{deduced_reactionssecond}) is an example of a CRN which satisfies Theorem~\ref{thmcrn2}. The corresponding set of reaction complexes~$\mathcal{C}$ can be inferred from the provided lists of reactions~$\mathcal{R}_i^{s,*}$, $\mathcal{R}_i^s$, $\mathcal{R}^w$ and $\mathcal{R}_i^z$, for $i=1,2,\dots,K$, given by~(\ref{setristarsecond}), (\ref{setrisecond}) (\ref{setrw}) and~(\ref{setriz}).

\section{Proof of Theorem~\ref{thmcrn3}}
\label{sec8}

Given an arbitrarily large integer $K \in {\mathbb N}$, we will show that there exists a CRN with two chemical species such that its reaction rate equations have at least $K$ stable limit cycles and the order of the chemical reactions is at most $n(K)=6K-2$. To do that, we start with the planar ODEs~(\ref{xode2})--(\ref{yode2}) and renormalize time $t$ to get a planar system with polynomial ODEs. Using an auxiliar function
$$
h(x,y) = \prod_{k=1}^K \Big( 1 + (x-a_k)^6 + (y-b_k)^6 \Big),
$$
we define our new time variable $\tau$ by
$$
\tau = \int_0^t \frac{1}{h(x(s),y(s))} \, \mbox{d} s.
$$
Then we obtain
\begin{eqnarray}
\frac{\mbox{d}x}{\mbox{d}\tau} 
\!&=&\! 
\frac{\mbox{d}x}{\mbox{d}t} 
\frac{\mbox{d}t}{\mbox{d}\tau}
=
h(x,y) 
\sum_{k=1}^K 
\frac{(x-a_k)
\big\{1-(x-a_k)^2-(y-b_k)^2\big\}
-(y-b_k)}{
1 + (x-a_k)^6 + (y-b_k)^6} 
\,, 
\qquad
\label{xode2t}\\
\frac{\mbox{d}y}{\mbox{d}\tau} 
\!&=&\! 
\frac{\mbox{d}y}{\mbox{d}t} 
\frac{\mbox{d}t}{\mbox{d}\tau}
=
h(x,y) 
\sum_{k=1}^K 
\frac{(y-b_k)
\big\{1-(x-a_k)^2-(y-b_k)^2\big\}
+(x-a_k)}{
1 + (x-a_k)^6 + (y-b_k)^6} 
\,, 
\qquad
\label{yode2t}
\end{eqnarray}
which is a planar ODE system with its right-hand side given as polynomials of degree $n(K)-1=6K-3$. Since we only rescaled the time, Figure~\ref{fig1}(a) provides an illustrative dynamics of the ODE system~(\ref{xode2t})--(\ref{yode2t}) for $K=4$. The illustrative trajectories have been calculated in Figure~\ref{fig1}(a) by solving ODEs~(\ref{xode2})--(\ref{yode2}) in time interval $t \in [0,100]$ and we can obtain the same result by solving ODEs~(\ref{xode2t})--(\ref{yode2t}) numerically in time interval $\tau \in [0, 10^{-9}]$. Applying $x$-factorable transformation to ODEs~(\ref{xode2t})--(\ref{yode2t}), we obtain
\begin{eqnarray}
\frac{\mbox{d}x}{\mbox{d}\tau} 
\!&=&\! 
x \, h(x,y) 
\sum_{k=1}^K 
\frac{(x-a_k)
\big\{1-(x-a_k)^2-(y-b_k)^2\big\}
-(y-b_k)}{
1 + (x-a_k)^6 + (y-b_k)^6} 
\,, 
\qquad
\label{xode2t2}\\
\frac{\mbox{d}y}{\mbox{d}\tau} 
\!&=&\! 
y \, h(x,y) 
\sum_{k=1}^K 
\frac{(y-b_k)
\big\{1-(x-a_k)^2-(y-b_k)^2\big\}
+(x-a_k)}{
1 + (x-a_k)^6 + (y-b_k)^6} 
\,, 
\qquad
\label{yode2t2}
\end{eqnarray}
which is a kinetic system of ODEs with polynomials of degree $n(K)=6K-2$ and which has $K$ stable limit cycles. Solving for $K$, we obtain $K=(n(K)+2)/6$, which establishes the lower bound~(\ref{Cnbound}) in Theorem~\ref{thmcrn3}.

\section{Discussion}
\label{sec9}

The main results of this paper have been formulated as Theorems~\ref{thmcrn1}, \ref{thmcrn2} and~\ref{thmcrn3}, which show that there exist CRNs with $K$ stable limit cycles for any integer $K \in {\mathbb N}.$ The CRN presented in our proof of Theorem~\ref{thmcrn1} consisted of $N(K)=K+2$ chemical species~$\mathcal{S}$ given by~(\ref{setSchemsp}) and $M(K) = 29 \, K$ chemical reactions~$\mathcal{R}$ (of at most seventh order) given by~(\ref{deduced_reactions}). The number of species and chemical reactions further increases in our proof of Theorem~\ref{thmcrn2}, where we restrict our investigation to CRNs with (at most) second-order kinetics. On the other hand, if we restrict to CRNs with only $N=2$ chemical species, then the order of the chemical reactions increases with $K$ as $n(K)=6K-2$ in our proof of Theorem~\ref{thmcrn3}.

An important question is whether we can further decrease $N(K)$ (the number of chemical species) and $M(K)$ (the number of chemical reactions) in Theorems~\ref{thmcrn1} and~\ref{thmcrn2} and still obtain a CRN with $K$ stable limit cycles. One possibility to decrease $M(K)$ is to use one chemical reaction to interpret multiple terms on the right-hand sides of ODEs~(\ref{xode5})--(\ref{viode5}). We have already done this in the reaction set
$\mathcal{R}_i^*$ given by~(\ref{setristar}) with the reaction
\begin{equation}
V_i + 2X + 2Y \mathop{\longrightarrow}^{1} V_i + X + Y,
\label{vi2x2y}
\end{equation}
which corresponds to terms of the form $-v_i x^2 y^2$ appearing in both equations~(\ref{xode5}) and (\ref{yode5}). Another way to construct a CRN with reactions modelling the two terms, $-v_i x^2 y^2$, in the reaction rate equations~(\ref{xode5})--(\ref{yode5}), is to use one chemical reaction per one term on the right-hand side. That is, the chemical reaction~(\ref{vi2x2y}) could be replaced by two chemical reactions
$$
V_i + 2X + 2Y \mathop{\longrightarrow}^{1} V_i + X + 2Y,
\qquad
\mbox{and}
\qquad
V_i + 2X + 2Y \mathop{\longrightarrow}^{1} V_i + 2X + Y
$$
without modifying the form of the reaction rate equations~(\ref{xode5})--(\ref{yode5}). In particular, if our aim is to decrease the number $M(K)$ of chemical reactions, we could consider to `merge' some other reactions, which have the same reactants. For example, reaction lists~(\ref{setri}) and~(\ref{setristar}) contain chemical reactions
$$
V_i + 3Y \mathop{\longrightarrow}^{k_{i,7}} 2V_i+3Y,
\qquad
V_i + 3Y \mathop{\longrightarrow}^{k_{i,17}} V_i + 4Y.
$$
If these chemical reactions had the same reaction rate constants~$k_{i,7}$ and~$k_{i,17}$, then we could replace them by one chemical reaction given by
$$
V_i + 3Y \mathop{\longrightarrow}^{k_{i,7}} 2 V_i + 4Y
$$
and we would obtain a CRN which has $28 \, K$ chemical reactions rather than $29 \, K$, which we use in Theorem~\ref{thmcrn1}.
Consequently, there is potential to decrease the size of the constructed CRN by a careful choice of our parameters or by modifying the right-hand sides of reaction rate equations~(\ref{xode5})--(\ref{viode5}). However, the focus of our paper was on the existence proofs and we leave the improvement of bounds on $N(K)$ and $M(K)$ to future work.

Another possible direction to investigate is to consider more detailed stochastic description of CRNs, written as continuous time discrete space Markov chains and simulated by the Gillespie algorithm~\cite{Erban:2020:SMR}. Such simulations would help us to investigate how our parameters $a_i,$ $b_i$, $i=1,2,\dots,K$, needs to be chosen that the system not only has the limit cycles of comparable size (as we visualized in Figure~\ref{fig3} in the ODE setting), but it also follows each of these limit cycles with a similar probability (comparable to $1/K$). This could also be achieved by using the noise-control algorithm~\cite{Plesa:2018:NCM} for designing CRNs. This algorithm structurally modifies a given CRN under mass-action kinetics, in such a way that (i) controllable state-dependent noise is introduced into the stochastic dynamics, while (ii) the reaction rate equations are preserved. In particular, it could be used to introduce additional chemical reactions (which do not change the ODE dynamics), but lead to controllable noise-induced switching between different limit cycles.

\vskip 10mm

\noindent
{\bf Acknowledgements.}
This work was supported by the Engineering and Physical Sciences Research Council, grant
number EP/V047469/1, awarded to Radek Erban. This work was also supported by the
National Science Foundation, grant number DMS-1620403 and a Visiting Research Fellowship
from Merton College, Oxford, awarded to Hye-Won Kang.

\setlength{\bibsep}{3pt plus 0.3ex}

\newpage

\end{document}